\newcommand{\diagram}{}
\def\diagram#1#{\diagramaux{#1}}
\newcommand\diagramaux[2]{\begin{gathered}\xymatrix#1{#2}\end{gathered}}
\newcommand{\J}{\mathsf{J}}
\newcommand{\into}{\hookrightarrow}    
\newcommand{\Cat}{\bm{\mathrm{Cat}}}
\newcommand{\Gpd}{\bm{\mathrm{Gpd}}}
\newcommand{\Bicat}{\bm{\mathrm{Bicat}}}
\newcommand{\equals}{\quad=\quad}   
\newcommand{\ff}{\mathsf{ff}}
\DeclareMathOperator{\id}{id}
\DeclareMathOperator{\pr}{pr}
\newtheorem{theorem}{Theorem}[section]
\newtheorem{lemma}[theorem]{Lemma}
\newtheorem{cor}[theorem]{Corollary}
\newtheorem{prop}[theorem]{Proposition}
\theoremstyle{definition}
\newtheorem{definition}[theorem]{Definition}
\theoremstyle{definition}
\newtheorem{remark}[theorem]{Remark}
\theoremstyle{definition}
\theoremstyle{definition}
\newtheorem{example}[theorem]{Example}
\begin{document}

\title[Formal anafunctors]{The elementary construction of formal anafunctors}

\author[D.M.\ Roberts]{David Michael Roberts}

\address{School of Mathematical Sciences, The University of Adelaide, Adelaide, SA 5005, Australia}

\email{david.roberts@adelaide.edu.au}

\subjclass[2010]{18D05}

\keywords{anafunctors, bicategory of fractions, 2-site, formal category theory}
\thanks{This document is under a Creative Commons Attribution license: \href{https://creativecommons.org/licenses/by/4.0/}{\texttt{creativecommons.org/licenses/by/4.0/}}}

\begin{abstract}
This article gives an elementary and formal 2-categorical construction of a bicategory of right fractions analogous to anafunctors, starting from a 2-category equipped with a family of covering maps that are fully faithful and co-fully faithful.
\end{abstract}

\maketitle

\section{Introduction}

Anafunctors were introduced by Makkai \cite{Makkai} as new 1-arrows in the 2-category $\Cat$ to talk about category theory in the absence of the axiom of choice.
The aim was to make functorial those constructions that are only defined by some universal property, rather than by some specified operation. One also recovers the characterisation of equivalences of categories as essentially surjective, fully faithful 1-arrows.
The construction by Bartels \cite{Bartels} of the analogous bicategory $\Cat_{ana}(S,J)$, whose 1-arrows are anafunctors, starting from the 2-category $\Cat(S)$ of internal categories was extended in \cite{Roberts_12} to variable full sub-2-categories  $\Cat'(S) \into \Cat(S)$. 
The canonical inclusion 2-functor $\Cat'(S) \into \Cat'_{ana}(S,J)$ was there shown to be a  2-categorical localisation in the sense of Pronk \cite{Pronk_96} at the fully faithful functors which are locally weakly split in the given pretopology $J$ on $S$.

In these notes I show that given a 2-category $K$ equipped with a (strict) singleton pretopology $\J$ whose elements are fully faithful and co-fully faithful arrows, one can construct an analogue  $K_\J$ of the bicategory $\Cat'_{ana}(S,J)$.
The 1-arrows of $K_\J$ are formal 2-categorical versions of anafunctors, here dubbed \emph{$\J$-fractions}.
The construction of $K_\J$ is elementary in the sense of only needing the first-order theory of 2-categories, and the construction is Choice-free. 
The original 2-category $K$ is a wide and locally full sub-bicategory of $K_\J$ and the inclusion 2-functor $A_\J\colon K \into K_\J$ is a bicategorical localisation; this result uses Pronk's comparison theorem from \cite{Pronk_96}, but it should be possible to prove directly using the construction given here.

The following quote from \cite{Simpson_06} should be kept in mind when reading the elementary calculations in these notes, as no such details have fully appeared in the literature, let alone at the level of generality here:

\begin{quote}

	Nonetheless, it is interesting to note the prevalence of formulations leaving ``to the reader'' parts of the proofs of details of the localization constructions. \ldots\ 
	Another interesting reference is Pronk's paper on localization of 2-categories [21]\footnote{\cite{Pronk_96} in the References below.}, pointed out to me by I.\ Moerdijk. 
	This paper constructs the localization of a 2-category by a subset of 1-morphisms satisfying a generalization of the right fraction condition.
	\ldots\ the full set of details for the coherence relations on the level of 2-cells is still too much, so 
	the paper ends with:

	\begin{itemize}
		\item[[21], p. 302:] ``It is left to the reader to verify that the above defined isomorphisms $a$, $l$ and $r$ are natural in their arguments and satisfy the identity coherence axioms.'' 
	\end{itemize}

\end{quote}

One pleasant feature of the current approach, at least for the author, is that one could take the opposite 2-category everywhere in the current notes and everything will still work fine, only exchanging pullbacks for pushouts everywhere. In this way, one could also localise suitable 2-categories using \emph{cospans}, rather than spans, for instance 2-categories whose objects are more algebraic in nature, rather than geometric, like Hopf algebroids.
It is not clear that for alternative presentations of the localisation, say one using of left-principal bibundles, such an approach would still work, or what should play the r\^ole of the 1-arrows when one is working with a general 2-category and not a 2-category of structured or internal groupoids.

\begin{remark}
This article has had a long and tortuous history. 
It started out as the second half of what was published as \cite{Roberts_14a}, in an attempt to give an elementary and completely self-contained proof ommitting no details of the results of \cite{Pronk_96} dealing with 2-categories of stacks as bicategorical localisations.
In this the author was partially influenced by the late Vladimir Voevodsky's insistence on details and constructions in what might otherwise seem merely bureaucratic proofs.
The appearance of \cite{Pronk-Warren_14} gave a much more satisfactory (and conceptual) proof of these results, after which the author lost hope that there was any merit in the current naive approach.
However, work in preparation using cospans to localise 2-categories of topological groupoids required the machinery of the present paper, in its dual incarnation, so it is hoped there is some merit in the elementary approach in it.
\end{remark}

\section{Preliminaries}

We refer to \cite[Chapter 2]{Johnson-Yau} for background on bicategories.

\begin{definition}
	\label{def:representably_P}

	If $\mathcal{P}$ is a property of functors, then we say that a 1-arrow
	$f\colon x\to y$ in a 2-category $K$ is \emph{representably
	$\mathcal{P}$} (or just $\mathcal{P}$) if for all objects $z$ of $K$ we have that $f_*\colon
	K(z,x)\to K(z,y)$ has property $\mathcal{P}$.

\end{definition}

The most important case for the present paper is the property `fully faithful', and we will abbreviate `representably fully faithful' to ff, and will denote by $\ff$ the class of ff 1-arrows in a 2-category. 
Note however that Definition~\ref{def:representably_P} can be rewritten as a first-order property of a 1-arrow in a 2-category (and even in a bicategory).

\begin{definition}
	\label{def:ff_arrow}

	A 1-arrow $f\colon x\to y$ in a bicategory is ff if for all $g, h\colon w\to x$ and $\widetilde{a}\colon f\circ g \Rightarrow f\circ h$ there is a unique $a\colon g\Rightarrow h$ such that $\widetilde{a} = 1_f \circ a$. 

\end{definition}

\begin{example}
Any equivalence in a 2-category is ff; it is a neat exercise to directly construct the required 2-arrow $a$.
It is less easy to do the analogous construction for an equivalence in a bicategory, showing that such a 1-arrow is ff in an elementary fashion, but still possible.\footnote{In fact the computation shows that a 1-arrow with a representably faithful pseudo-retract is ff, but we don't need this level of generality.}
\end{example}


\begin{lemma}
\label{lemma:restricted_2of3_for_ff}
	If $f\colon y\to z \in \ff$ and $g\colon x \to y$ is any other arrow then $g\in \ff$ if and only if $f\circ g \in \ff$. 
	If $h\colon y\to z$ is another arrow that is isomorphic to $f$ in $K(y,z)$ then $h\in \ff$. 
	Moreover, if $f$ is isomorphic to $f'\colon y'\to z'$ (in the arrow 2-category) then $f'\in \ff$.
\end{lemma}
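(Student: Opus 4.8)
The plan is to read all three assertions through Definition~\ref{def:ff_arrow}, which says that $f\colon y\to z$ lies in $\ff$ exactly when, for every object $w$, post-composition $f_*\colon K(w,y)\to K(w,z)$ is a fully faithful functor of hom-categories. Each claim then becomes a routine statement about fully faithful functors, and I would verify it by unwinding into whiskerings and the interchange law so as to stay first-order and remain valid in a bicategory.

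First I would treat the two-out-of-three statement. Fixing $w$ and noting $(f\circ g)_* = f_*\circ g_*$ (up to the associator in the bicategorical case), this is the elementary fact that, when $f_*$ is fully faithful, $g_*$ is fully faithful if and only if $f_*\circ g_*$ is. Concretely, given $p,q\colon w\to x$: for the ``if'' direction one whiskers a 2-cell $b\colon g\circ p\Rightarrow g\circ q$ with $f$ to get $1_f\circ b\colon (fg)p\Rightarrow (fg)q$, uses fullness of $f\circ g$ to produce a candidate factorisation $a\colon p\Rightarrow q$, and uses faithfulness of $f$ to see that $b = 1_g\circ a$; for the ``only if'' direction one factors a given $\widetilde{a}\colon (fg)p\Rightarrow (fg)q$ first through $f$ and then through $g$, rewriting $1_f\circ(1_g\circ a)$ as $1_{fg}\circ a$ by interchange. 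Uniqueness at each stage yields uniqueness overall.

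Next, for invariance under an invertible 2-cell $\phi\colon f\Rightarrow h$ in $K(y,z)$, the idea is that $\phi$ induces a natural isomorphism $f_*\cong h_*$, and a functor isomorphic to a fully faithful one is fully faithful. Elementarily, for $p,q\colon w\to y$ conjugation by the invertible whiskerings $\phi\circ 1_p$ and $\phi\circ 1_q$ is a bijection between 2-cells $f\circ p\Rightarrow f\circ q$ and 2-cells $h\circ p\Rightarrow h\circ q$; the interchange law shows this bijection carries $1_f\circ a$ to $1_h\circ a$, so the unique $f$-factorisation of the conjugate of any $\widetilde{a}\colon h\circ p\Rightarrow h\circ q$ transports to a unique $h$-factorisation of $\widetilde{a}$. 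I expect this interchange bookkeeping to be the only delicate point, and it is exactly where the bicategorical coherence isomorphisms get absorbed.

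Finally, I would reduce the arrow-2-category claim to the first two parts together with the remark, made just before the lemma, that every equivalence is ff. Unpacking an isomorphism $f\simeq f'$ in the arrow 2-category yields equivalences $u\colon y\to y'$ and $v\colon z\to z'$ and an invertible square $\alpha\colon f'\circ u\Rightarrow v\circ f$. Since $v$ and $f$ are ff, $v\circ f$ is ff by the two-out-of-three part; since $\alpha$ is invertible, $f'\circ u$ is then ff by the invariance part. Composing on the right with a pseudo-inverse $u^{-1}$ of $u$ and applying two-out-of-three once more makes $(f'\circ u)\circ u^{-1}$ ff, and as this is isomorphic to $f'$ the invariance part gives $f'\in\ff$. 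The main obstacle throughout is organisational rather than conceptual: keeping the whiskered 2-cells and their interchange rewrites straight.
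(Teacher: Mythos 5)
Your proof is correct, and since the paper states Lemma~\ref{lemma:restricted_2of3_for_ff} without proof, your argument---reading $\ff$ representably so that each clause reduces to the standard two-out-of-three and isomorphism-invariance facts for fully faithful functors, with the interchange law absorbing the whiskering bookkeeping---is exactly the routine verification the paper intends the reader to supply. The only point worth noting is that your third part correctly avoids the trap of needing the ``other'' two-out-of-three direction by cancelling $u$ against a pseudo-inverse and invoking the isomorphism-invariance clause, which is the right way to deduce $f'\in\ff$ from $f'\circ u\in\ff$.
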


The following lemma will be a major workhorse in the construction below.

\begin{lemma}
	\label{lemma:unique_lifts}

	Let $p\colon u\to a$ be an ff 1-arrow in a 2-category $K$. 
	Then for any 1-arrow $f\colon b\to a$ and any two lifts $k,l\colon b \to u$ of $f$ through $p$, there is a unique 2-arrow $k\Rightarrow l$ lifting the identity 2-arrow on $f$ through $p$.

\end{lemma}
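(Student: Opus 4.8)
The plan is to read this lemma as nothing more than the defining property of ff 1-arrows (Definition~\ref{def:ff_arrow}) specialised to the identity 2-arrow. Write $p\colon u\to a$ for the given ff 1-arrow. I would first fix the intended meaning of the two pieces of vocabulary: a \emph{lift} of $f\colon b\to a$ is a 1-arrow $k\colon b\to u$ with $p\circ k = f$ (strictly), and a 2-arrow $\alpha\colon k\Rightarrow l$ \emph{covering the identity 2-arrow on $f$} is one satisfying $1_p\circ\alpha = 1_f$. Note this equation typechecks: since $p\circ k = f = p\circ l$, the whiskered 2-arrow $1_p\circ\alpha$ goes $p\circ k = f \Rightarrow f = p\circ l$, so comparing it with $1_f$ makes sense.

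With the vocabulary pinned down, the argument is a single application of the ff property. Because $k$ and $l$ are lifts, we have $p\circ k = f = p\circ l$, so $\widetilde{a} := 1_f$ is a legitimate 2-arrow $p\circ k \Rightarrow p\circ l$. Applying Definition~\ref{def:ff_arrow} to the ff 1-arrow $p$, with the roles of $g,h$ played by $k,l$ and the test 2-arrow taken to be $\widetilde{a}=1_f$, produces a unique $\alpha\colon k\Rightarrow l$ with $1_p\circ\alpha = 1_f$. By our reading this $\alpha$ is precisely a 2-arrow covering the identity on $f$, and the uniqueness built into the ff property is exactly the uniqueness the lemma demands. Equivalently, one may phrase this as: $p$ being ff means $p_*\colon K(b,u)\to K(b,a)$ is a fully faithful functor, and a fully faithful functor admits a unique preimage of any morphism, here the identity $1_f\colon f\to f$.

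Since the statement dissolves on unwinding definitions, I do not anticipate a genuine obstacle; the only point needing care is bookkeeping about the precise sense of \emph{lift}. If in the intended usage lifts instead come equipped with chosen 2-isomorphisms $\varphi_k\colon p\circ k\Rightarrow f$ and $\varphi_l\colon p\circ l\Rightarrow f$, then ``covering the identity'' should be read as $\varphi_l\circ(1_p\circ\alpha)=\varphi_k$, and the same one-line argument goes through after feeding $\widetilde{a}:=\varphi_l^{-1}\circ\varphi_k\colon p\circ k\Rightarrow p\circ l$ into the ff property in place of $1_f$. Either way, the entire content of the proof is identifying the correct test 2-arrow and invoking full faithfulness; no coherence computations are involved.
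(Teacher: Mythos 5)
Your proof is correct and matches the paper's intended argument: the paper itself only remarks that the lemma ``follows almost immediately from the definition of ff,'' and your instantiation of Definition~\ref{def:ff_arrow} with the test 2-arrow $\widetilde{a}=1_f$ is exactly that unwinding. The care you take in pinning down ``lift'' and ``covering the identity'' is the only content the paper leaves implicit, and you handle it correctly.
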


The proof of this lemma follows almost immediately from the definition of ff arrows.

\begin{example}
	\label{eg:unique_lifts_0}

	Given an ff 1-arrow $p\colon u\to x$ in $K$ such that the strict pullback $u\times_x u$ exists, then there is a unique 2-arrow $\ell_p\colon \pr_1 \Rightarrow \pr_2\colon u\times_x u \to u$ lifting $\id\colon p\Rightarrow p$.
\end{example}

In this paper, all pullbacks are likewise \textbf{strict}.

\begin{example}
	\label{eg:unique_lifts_1}

	Consider a commutative triangle
	\[
		\xymatrix{
			u \ar[rr]^\phi \ar[dr]_p && v \ar[dl]^q\\
			& x
		}
	\]
	with $q\colon v\to x$ ff. 
	Then assuming the relevant strict pullbacks exist, there is an equality
	\[
		\xymatrix{
			u\times_x u \ar@/^1pc/[r]_{\ }="s1"^{\pr_1} \ar@/_1pc/[r]^{\ }="t1"_{\pr_2}
			& u \ar[r]^\phi & v \ar@{=>}"s1";"t1"^{\ell_p}
		}
		\equals
		\xymatrix{
			u\times_x u \ar[r]^-{\phi\times\phi} & v\times_x v
			\ar@/^1pc/[r]_{\ }="s2"^{\pr_1} \ar@/_1pc/[r]^{\ }="t2"_{\pr_2} & v
			\ar@{=>}"s2";"t2"^{\ell_q}
		}
	\]
	between the pasted 2-cells, as the source and target 1-arrows all lift $u\times_x u \to x$ through $q$.

\end{example}

\begin{example}
	\label{eg:unique_lifts_2}

	A more complicated example is the equality of pasted 2-cells in
	\[
		\vcenter{\xymatrix{
		&& u\times_x u \ar@/_1pc/[dr]^{\ }="t1"_(0.35){\pr_2} \ar@/^1pc/[dr]_{\ }="s1"^{\pr_1}\\
		u\times_x u \ar[r]^-{\Delta\times\id_u} & u\times_x u \times_x u
		\ar@/^1pc/[ur]^{\pr_{12}} \ar@/_1pc/[dr]_{\pr_{23}}&& u\\
		&& u\times_x x \ar@/^1pc/[ur]_{\ }="s2"^(0.35){\pr_1} \ar@/_1pc/[ur]^{\ }="t2"_{\pr_2}
		\ar@{=>}"s1";"t1"^{\ell_p}
		\ar@{=>}"s2";"t2"^{\ell_p}
		}}
		\equals
		\vcenter{\xymatrix{
		u\times_x u \ar@/^1pc/[r]_{\ }="s3"^{\pr_1} \ar@/_1pc/[r]^{\ }="t3"_{\pr_2} & u
		 \ar@{=>}"s3";"t3"^{\ell_p}
		}}
	\]
\end{example}

The structure of a site on a 2-category is not a common notion so we need to specify what we mean. 
There are at least two different ways to describe this in the 1-categorical case, namely using sieves and using pretopologies, and it is not clear a priori that they generalise to the same thing for 2-categories. 
Our definition will be as follows, as this paper only deals with unary sites.

\begin{definition}
	\label{def:strict_pretopology}

	A \emph{singleton strict pretopology} on a 2-category $K$ is a class $\J$ of 1-arrows which contains all identity arrows, is closed under composition and the strict pullback an element of $\J$ exists and is again in $\J$. 
	We will assume that \emph{specified} strict pullbacks are given---rather than merely assuming they exist---and that the pullback of an identity 1-arrow is again an identity 1-arrow.

\end{definition}

Since this is the same thing as a singleton pretopology on the 1-category underlying the 2-category, we refrain from placing the prefix `2-' in the name.
If one merely asks for existence of pullbacks, then one may use a global axiom of choice to make the pullback of a cover an operation.%

\begin{example}

	Let $K$ be a 2-category which admits specified strict pullbacks. 
	Then $\ff$ is a singleton strict pretopology.

\end{example}

This is in some sense a degenerate example. 
The following is more of interest.

\begin{example}\label{ex:pretopology_on_cat_s}

	Let $S$ be a finitely complete category with specified limits and $\J_0$ a singleton pretopology on $S$. 
	Then we have the 2-categories $\Cat(S)$ and $\Gpd(S)$ of internal categories and groupoids. Let $\J$ denote the class of internal functors in either of those 2-categories whose object component is an arrow in $\J_0$. 
	Then $\J$ is a singleton strict pretopology on both $\Cat(S)$ and $\Gpd(S)$.

\end{example}

In addition, we need to consider a 2-categorical version somewhat analogous to subcanonicity, and here we cannot avoid involving the 2-arrows. 
This makes the notion essentially 2-categorical, and not just a structure on the underlying 1-category as is the case for Definition~\ref{def:strict_pretopology}.\footnote{For the sense in which this deserves to be considered a type of subcanonicity, cf \cite[Lemma 52]{GarnerBourke}, which states that an ff regular epimorphism is also co-ff. }

\begin{definition}\label{def:subcanonical pretop}

	Given a singleton strict pretopology $\J$, we will call it \emph{bi-ff} if every $j\colon u\to x$ in $\J$ is ff and also co-fully faithful (co-ff): for all $g, h\colon x\to y$ and $\widetilde{a}\colon g\circ j \Rightarrow h\circ j$ there is a unique $a\colon g\Rightarrow h$ such that $\widetilde{a} = 1_j \circ a$. 

\end{definition}

In this paper we do not need to descend 1-arrows down a cover in the pretopology (which is a consequence of representable presheaves being sheaves), but only 2-arrows, so the weaker notion of ff + co-ff is sufficient.
We will not here dwell on how this relates to 2-dimensional sheaf theory \`a la Street \cite{Street_82}.

\begin{example}
	Continuing Example~\ref{ex:pretopology_on_cat_s}, if we additionally assume the pretopology $\J_0$ consists of regular epimorphisms (hence is a subcanonical pretopology on $S$), then $\J$ is a bi-ff singleton strict pretopology on $\Cat(S)$ and on $\Gpd(S)$.
	Indeed, the arrows in this pretopology are also regular epimorphisms, though we do not need this here.
\end{example}

This example partly recovers the examples that were used in \cite[\S 8]{Roberts_12}; variants on this definition will give all examples from \emph{loc.\ cit}.

The main object of study of this paper are 2-categories $K$ with a choice of bi-ff singleton strict pretopology $\J$. 
We shall just refer to these as \textbf{2-sites} for brevity, though properly speaking it is a very special case of this notion.

As a consequence of our definition of 2-site, we don't just get descent of 2-arrows along covers, but along maps between covers.

\begin{lemma}\label{lemma:stronger_co-ff}
Given a 2-site $(K,\J)$, and a diagram
\[
	\xymatrix{
		v \ar[dr]_k \ar[rr]^r && w \ar[dl]^j \\
		& x
	}
\]
where $k,j\in \J$, then $r$ is also co-ff.
\end{lemma}
The following proof, simplifying the author's, is due to the anonymous referee.
\begin{proof}
Recall that $r$ being co-ff means that given a 2-arrow $\overline{a}\colon f\circ r \Rightarrow g\circ r$, there is a unique 2-arrow $a\colon f\Rightarrow g$ such that $a\circ \id_r = \overline{a}$. 
From our definition of 2-site, $j$ and $k$ are both ff and co-ff. 
This also implies that the two projection maps $\pr_1,\pr_2$ in the next diagram are in $\J$, hence are both co-ff.
Since $j\circ \pr_2 = j\circ r \circ \pr_2$ and $j$ is ff, there is a unique lift of $\id_{j\circ\pr_2}$ to an invertible 2-arrow $r\circ\pr_1 \stackrel{\sim}{\Rightarrow} \pr_2$:
\[
	\diagram{
	&  v \ar[dr]^r_(0.2){\ }="s" \\
	v\times_x w \ar[ur]^{\pr_1} \ar[rr]_{\pr_2}^(0.6){\ }="t" \ar@/_1pc/[drr]_{j\circ\pr_2} && w \ar[d]^j \\
	&&x
	\ar@{=>}"s";"t"_\simeq
	}
\]
But $\pr_2$ is co-ff, implying $r\circ \pr_1$ is co-ff, and since $\pr_1$ is co-ff, then so is $r$ (applying two of the cases of Lemma~\ref{lemma:restricted_2of3_for_ff} in $K^{op}$).
\end{proof}

\section{The bicategory of \texorpdfstring{$\J$}{J}-fractions}

We are aiming to localise a 2-category, and in time-honoured tradition we shall call the arrows in the localised 2-category fractions. 
Fractions are defined relative to a strict pretopology.

\begin{definition}

	Let $(K,\J)$ be a 2-site.  
	A \emph{$\J$-fraction} is a span $x\xleftarrow{j} u \xrightarrow{f} y$ in $K$ where $j\in \J$, to be denoted $(j,f)$.

\end{definition}

For example, given any 1-arrow $f\colon x\to y$ in $K$, we have the fraction $(\id_x,f)$. 
In particular, we have for any object $a$ the \emph{identity fraction}, which is $(\id_x,\id_x)$

\begin{definition}

	Let $x \xleftarrow{j} u \xrightarrow{f} y$ and $x \xleftarrow{k} v \xrightarrow{g} y$ be a pair of $\J$-fractions in $K$. A \emph{map of $\J$-fractions} $(j,f) \Rightarrow (k,g)$ consists of a 2-arrow
	\[
		\xymatrix{
			& u \ar[dr]^f_(.3){\ }="s" \\
			 u\times_x v \ar[ur]^{\pr_1}\ar[dr]_{\pr_2} & & y\\
			& v  \ar[ur]_g^(.3){\ }="t"
			\ar@{=>}"s";"t"_a
		}
	\]
Sometimes we will also write the 1-arrow $x \leftarrow u\times_x v$ in such a diagram for emphasis, so that the 0-source object is clear.\footnote{Note that, as presented, this 2-arrow alone does not allow us to reconstruct its (1-)source and (1-)target; we take the source and target as implicitly part of the data (cf the definition of arrow in the category of ZFC-sets).}
\end{definition}

There are certain maps of fractions which are easier to describe and to compose, and the coherence maps of the bicategory we are going to define all turn out to be examples, so we shall spend some time detailing these.

\begin{definition}\label{def:renaming_map}

	A \emph{renaming map}\footnote{The `renaming transformations' of \cite[\S1]{Makkai} in the case when $K=\Cat$ are a special case of the notion here. Makkai requires that $r$ is invertible and $a_r$ is the identity 2-arrow.} $r$ from the fraction $(j,f)$ to the fraction $(k,g)$ is a map of spans in a 2-category of the form:
	\[
		\xymatrix{
			& \ar[dl]_j u \ar[dr]^f_{\ }="s" \ar[dd]_r\\
			x && y\\
			& \ar[ul]^k v \ar[ur]_g^{\ }="t"
			\ar@{=>}"s";"t"^{a_r}
		}
	\]
	We can compose renaming maps and so get a category $K_\J^R(x,y)$ with objects the fractions from $x$ to $y$ and arrows the renaming maps.

\end{definition}

As we shall see, we will also have a category with objects the $\J$-fractions and arrows the maps of fractions, and a functor including $K_\J^R(x,y)$ into this latter category. 
For now we will be content with giving the definition of the arrow component of this functor, without proving functoriality; namely, a renaming map $r$ from $(j,f)$ to $(k,g)$ as above is sent to the map $\iota(r)$ of fractions specified by the 2-arrow
\begin{equation}\label{eq:iota_on_arrows}
	\vcenter{\xymatrix{
		& u \ar[dd]_r \ar[dr]^f_{\ }="s2"\\
		 u\times_x v \ar[ur]_{\ }="s1"^{\pr_1} \ar[dr]^{\ }="t1"_{\pr_2} && y \\
		& v \ar[ur]_g^{\ }="t2"
		\ar@{=>}"s2";"t2"^{a_r}
		\ar@{=>}"s1";"t1"
	}}\,,
\end{equation}
where the 2-arrow on the left is the canonical lift of the identity 2-arrow $k\circ r \circ \pr_1 = k\circ \pr_2$ through the ff arrow $k$, 
using Lemma~\ref{lemma:unique_lifts}.

\begin{definition}

	The identity map $1\colon (j,f) \Rightarrow (j,f)$ on a $J$-fraction
	$x\xleftarrow{j} u \xrightarrow{f} y$ is given by $\iota(\id_u)$.

\end{definition}

The (vertical) composition of maps of $\J$-fractions proceeds as follows. Given

\begin{align*}
	t_1\colon & (j_1,f_1) \Rightarrow (j_2,f_2)\\
	t_2\colon & (j_2,f_2) \Rightarrow (j_3,f_3)
\end{align*}

where $x\xleftarrow{j_i} u_i \xrightarrow{f_i} b$, consider the 2-arrow $t_1 \oplus t_2$ filling the diagram\footnote{One might be concerned with the bracketing of the triple pullback here; for concreteness we can take $(u_1\times_x u_2)\times_x u_3$, it would not change the final result if we used the other option.}
\begin{equation}\label{eq:precomposition of maps of fractions}
	\vcenter{\xymatrix{
		&u_1\times_x u_2 \ar[r] \ar[]!DR;[dr] & u_1 
		\ar[dr]_(.2){\ }="s1"^{f_1} \\
		 u_1\times_x u_2\times_x u_3 \ar[]!UR;[ur]!DL \ar[]!DR;[dr]!UL && 
		 u_2 \ar[r]^(.1){\ }="t1"_(.1){\ }="s2"|{f_2} & y\\
		&u_2\times_x u_3 \ar[]!UR;[ur] \ar[r] & u_3 \ar[ur]^(.2){\ }="t2"_{f_3}
		 \ar@{=>}"s1";"t1"_{t_1}
		 \ar@{=>}"s2";"t2"_{t_2}
	}}\,,
\end{equation}
which we shall call the \emph{precomposition} of $t_1$ and $t_2$. 
We need to show that this 2-arrow descends along the arrow $u_1\times_x u_2\times_x u_3 \xrightarrow{\pr_{13}} u_1\times_x u_3 \in \J$. 
But $\pr_{13}$ is co-ff, and the source and target of $t_1 \oplus t_2$ factor as $u_1\times_x u_2\times_x u_3 \xrightarrow{\pr_{13}} u_1\times_x u_3 \to u_i \xrightarrow{f_i} y $ for $i=1$ and $i=3$ respectively. 
Thus $t_1 \oplus t_2$ descends uniquely, and we call this descended 2-arrow $t_1 + t_2$ (note that $+$ is \emph{not} a commutative operation!), and it gives a map of $\J$-fractions $(u_1,f_1) \Rightarrow (u_3,f_3)$.

\begin{remark}
	\label{rem:simple_vert_composition}

	If $u_1\times_x u_2 \times_x u_3 \to u_1\times_x u_3$ has a section, then the vertical composition $t_1 + t_2$ is the whiskering of $t_1\oplus t_2$ on the left with this section.

\end{remark}

An example of such a section arises when composing two maps of $\J$-fractions, where one of the maps arises from an inverible renaming map.
Here we say a renaming map with data $(r,a_r)$ as in Definition~\ref{def:renaming_map} is \emph{invertible} if $r$ is an invertible 1-arrow and $a_r$ is an invertible 2-arrow.
We record a special case of this as a lemma for future reference.

\begin{lemma}\label{lemma:renaming_iso_composite}
Let $x\xleftarrow{j} u \xrightarrow{f} y$ and $x\xleftarrow{k} v \xrightarrow{g} y$ be $\J$-fractions, $t\colon (j,f) \Rightarrow (k,g)$ be a map of fractions, and let $r\colon u' \to u$ be a renaming map from $x \xleftarrow{j'} u' \xrightarrow{f'} y$  to $(j,f)$, with $j' = jr$ and $a_r\colon f'\Rightarrow fr$.
Then $\iota(r)+t$ is given by the 2-arrow
\[
\hspace{1000pt minus 2000pt}
\diagram{
		&&u'\times_x u \ar[r]_{\ }="s1"^{\pr_1} \ar@/_/[dr]_{\pr_2}^{\ }="t1" & u'\ar[d]^r  \ar[dr]^{f'}_{\ }="s3"	\\
		u'\times_x v \ar[r]&u'\times_x u\times_x v  \ar[ur]^{\pr_{12}} \ar[dr]_{\pr_{23}} && u \ar[r]_(.1){\ }="s2"^(0.4){\ }="t3"|f & y\\
		&& u\times_x v \ar[ur]^{\pr_1} \ar[r]_{\pr_2} & v \ar[ur]^(.22){\ }="t2"_g
		\ar@{=>}"s2";"t2"_{t}
		\ar@{=>}"s1";"t1"^{\ell_{j\pr_2}}
		\ar@{=>}"s3";"t3"_{a_r}
	}
	\equals
	\diagram{
		&u'\ar[dr]^(.87){\ }="t1"_{r}   \ar@/^1pc/[drr]^{f'}_{\ }="s1"&	\\
		u'\times_x v  \ar[ur]^{\pr_{12}} \ar[dr]_{\pr_{23}} && u \ar[r]_(.1){\ }="s2"|f & y\\
		& u\times_x v \ar[ur]^{\pr_1} \ar[r]_{\pr_2} & v \ar[ur]^(.22){\ }="t2"_g
		\ar@{=>}"s2";"t2"_{t}
		\ar@{=>}"s1";"t1"_{a_r}
	}
\hspace{1000pt minus 2000pt}
\]
Hence if $a_r = \id$, $\iota(r)+t$ is given by the 2-arrow
\[
\hspace{1000pt minus 2000pt}
	\diagram{
		&&u'\times_x u \ar[r]_{\ }="s1"^{\pr_1} \ar@/_/[dr]_{\pr_2}^{\ }="t1" & u'\ar[d]^r  \ar[dr]^{f'}	\\
		u'\times_x v \ar[r]&u'\times_x u\times_x v  \ar[ur]^{\pr_{12}} \ar[dr]_{\pr_{23}} && u \ar[r]_(.1){\ }="s2"|f & y\\
		&& u\times_x v \ar[ur]^{\pr_1} \ar[r]_{\pr_2} & v \ar[ur]^(.2){\ }="t2"_g
		\ar@{=>}"s2";"t2"_{t}
		\ar@{=>}"s1";"t1"^{\ell_{j\pr_2}}
	}
	\equals
	\diagram{
		& &u \ar[dr]^f_{\ }="s"	 & \\
		u'\times_x v \ar[r]_{r\times \id} &u\times_x v \ar[ur]^{\pr_1} \ar[dr]_{\pr_2}& & y\\
		&   & v \ar[ur]^{\ }="t"_g
		\ar@{=>}"s";"t"_{t}
	}
\hspace{1000pt minus 2000pt}
\]
Further, given an \emph{invertible} renaming map $q$ from $(k,g)$ to $x\xleftarrow{k'} v' \xrightarrow{g'} y$, with $k=k'q$ and $g=g'q$ (hence $a_q=\id$), the composite $t+\iota(q)$ is given by the 2-arrow
\[
\hspace{1000pt minus 2000pt}
	\diagram{
		&&u\times_x v \ar[r]^{\pr_1} \ar[dr]_{\pr_2} & u \ar[dr]^{f}_(.2){\ }="s2"	\\
		u\times_x v' \ar[r]&u\times_x v\times_x v'  \ar[ur]^{\pr_{12}} \ar[dr]_{\pr_{23}} && v \ar[d]^q  \ar[r]^(.1){\ }="t2"_g & y\\
		&& v\times_x v' \ar@/^/[ur]^{\pr_1}_{\ }="s1" \ar[r]_{\pr_2}^{\ }="t1" & v' \ar[ur]_{g'}
		\ar@{=>}"s2";"t2"_{t}
		\ar@{=>}"s1";"t1"^{\ell_{k\pr_1}}
	}
	\equals
	\diagram{
		& &u \ar[dr]^f_{\ }="s"	\\
		u\times_x v' \ar[r]_{ \id\times q^{-1}} &u\times_x v \ar[ur]^{\pr_1} \ar[dr]_{\pr_2}& & y\\
		&   & v \ar[ur]^{\ }="t"_g
		\ar@{=>}"s";"t"_{t}
	}
\hspace{1000pt minus 2000pt}
\]
\end{lemma}

\begin{cor}
The assignment $\iota$ is a functor $K_\J^R(x,y) \to K_\J(x,y)$.
\end{cor}

\begin{proof}
Apply the first case in Lemma~\ref{lemma:renaming_iso_composite} to when $t = \iota(r')$ for an arbitrary renaming transformation $q$ and use Lemma~\ref{lemma:unique_lifts}.
\end{proof}

\begin{prop}

	We have a category $K_\J(x,y)$ with objects the $\J$-fractions from $x$ to $y$ and arrows the maps of $\J$-fractions.

\end{prop}

\begin{proof}
	That $1_{(j,f)}$ is the identity arrow for $x\xleftarrow{j}u\xrightarrow{f}y$ follows from the second and third cases of Lemma~\ref{lemma:renaming_iso_composite}, taking $r = \id_u$ and $q=\id_v$ respectively.

	We thus need only to show composition is associative. 
	Consider the diagram
	\begin{equation}\label{eq:unbiased_composition_three_2arrows}
		\raisebox{17ex}{\xymatrix{
			&& \pmb{u_{12}} \ar[rr]_(.73){\ }="s1" \ar[dr] && \pmb{u_1} \ar[dd]^{f_1}\\
			& \pmb{u_{123}} \ar[dr] \ar[ur] && \pmb{u_2} \ar[dr]^(.3){f_2}="t1"_(.4){\ }="s2" &\\
			u_{1234} \ar[ur] \ar[dr] && \pmb{u_{23}} \ar[ur] \ar[dr] && \pmb{y} \\
			& u_{234} \ar[ur] \ar[dr] && \pmb{u_3} \ar[ur]_(.3){f_3}="s3"^(.4){\ }="t2" &\\
			&& u_{34}\ar[rr]^(.73){\ }="t3" \ar[ur] && u_4 \ar[uu]_{f_4}
			\ar@{=>}"s1";"t1"_{t_1}
			\ar@{=>}"s2";"t2"_{t_2}
			\ar@{=>}"s3";"t3"_{t_3}
		}}
	\end{equation}
	where $u_{i\ldots k} := u_i\times_x \ldots \times_x u_k$. The bolded objects form a sub-diagram we will refer to below.
	We will show that the composites
	\[
		\xymatrix{
		u_{1234} \ar[r] & u_{14} \ar@/^1.2pc/[r]_{\ }="s"^{f_1}\ar@/_1.2pc/[r]^{\ }="t"_{f_4}
		& y \ar@{=>}"s";"t"^a
		}
	\]
	are equal to (\ref{eq:unbiased_composition_three_2arrows}) for  $a=(t_1+t_2)+t_3$ and $a = t_1 + (t_2 + t_3)$. 
	First consider $(t_1+t_2)+t_3$:
	\begin{align*}
		\xymatrix{
			u_{1234} \ar[r] & u_{14} \ar@/^1.7pc/[rr]_(.5){\ }="s" 
			\ar@/_1.7pc/[rr]^(.5){\ }="t" && y
			\ar@{=>}"s";"t"|(0.46){(t_1+t_2)+t_3\phantom{\big|}}
		}
		&\equals
		\xymatrix{
			u_{1234} \ar[r] & u_{134} \ar[r] & u_{14} 
			\ar@/^1.7pc/[rr]_(.5){\ }="s" 
			\ar@/_1.7pc/[rr]^(.5){\ }="t" && y
			\ar@{=>}"s";"t"|(0.46){(t_1+t_2)+t_3\phantom{\big|}}
		}\\
		&\\
		&\equals
		\vcenter{\xymatrix{
			&&& u_1 \ar[ddr]_(.4){\ }="s1" \\
			&& u_{13} \ar[ur] \ar[dr] \\
			u_{1234} \ar[r] & u_{134} \ar[ur] \ar[dr] && 
			u_3 \ar[r]^(.3){\ }="t1"_(.3){\ }="s2" & y\\
			&& u_{34} \ar[ur] \ar[dr]\\
			&&& u_4 \ar[uur]^(.4){\ }="t2"
			\ar@{=>}"s1";"t1"_{t_1+t_2}
			\ar@{=>}"s2";"t2"_{t_3}
		}}
	\end{align*}
	\begin{align*}
		\phantom{\xymatrix{
			u_{1234} \ar[r] & u_{14} \ar@/^1.5pc/[rr]_(.3){\ }="s" 
			\ar@/_1.5pc/[rr]^(.3){\ }="t" && y
			\ar@{=>}"s";"t"^{(t_1+t_2)+t_3}
		}}
		&\equals
		\vcenter{\xymatrix{
			& && \pmb{u_1} \ar[ddr]_(.4){\ }="s1" \\
			  & \pmb{u_{123}} \ar[r] & \pmb{u_{13}} \ar[ur] \ar[dr] && \\
			u_{1234} \ar[r] \ar[ur] & u_{134} \ar[dr] \ar[ur] && \pmb{u_3} \ar[r]^(.3){\ }="t1"_(.3){\ }="s2" & \pmb{y}\\
			&& u_{34} \ar[ur] \ar[dr] \\
			&&& u_4 \ar[uur]^(.4){\ }="t2"
			\ar@{=>}"s1";"t1"_{t_1+t_2}
			\ar@{=>}"s2";"t2"_{t_3}
		}}
	\end{align*}
	ommitting some of the labels on the 1-arrows for clarity. 
	Now the whiskered 2-arrow in the subdiagram on the bold symbols above is equal to the composite 2-arrow in the subdiagram of (\ref{eq:unbiased_composition_three_2arrows}) on the bold symbols, hence the whole diagram equals (\ref{eq:unbiased_composition_three_2arrows}). 
	A symmetric argument shows that $t_1 + (t_2 + t_3) \circ 1_{u_{1234} \to u_{14}}$ is also equal to (\ref{eq:unbiased_composition_three_2arrows}). 
	By uniqueness of descent, composition of maps of $\J$-fractions is associative, and $K_\J(x,y)$ is a category.
\end{proof}

\subsection{Defining the bicategory \texorpdfstring{$K_\J$}{K\_J}}

Now we want to show that $K_\J(x,y)$ is the hom-category of a bicategory, so we need a composition functor. 
Composing 1-arrows is easy:

\begin{definition}

	The composition of $\J$-fractions is the composite span
	\[
		\xymatrix{
		&& u\times_y w \ar[dl] \ar[dr]\\
		& u \ar[dl] \ar[dr]&& w \ar[dl] \ar[dr] \\
		x && y && z
		}
	\]
	where recall we are assuming we have specified pullbacks of 1-arrows in $\J$, so this is well-defined.

\end{definition}

We shall define the composition in the bicategory $K_\J$ by defining left and right whiskering functors and proving the interchange law as outlined in \cite[pp 126-127]{Makkai}\footnote{Makkai says, helpfully, ``Next, we need to verify that thus we have defined functors \ldots we leave the task to the reader.'' [\emph{ibid.} page 127]} for the case where $K = \Cat$ and $\J$ is the class of fully faithful, surjective-on-objects functors.

\begin{definition}\label{def:right whiskering}
	
Let $t$ be a map of fractions from $x\leftarrow u\xrightarrow{f} y$ to $x\leftarrow v\xrightarrow{g} y$.
	The \emph{right whiskering} of $t$ by the $\J$-fraction $y\xleftarrow{l} w \xrightarrow{h} z$ is given by
	\[
		\xymatrix{
			&&& w \times_{y,f} u \ar[]!DR;[dr]^{f\circ \pr_2}_(.3){\ }="s"\\
			x&\ar[l]u\times_x v & \ar[l] 
				w\times_{y,f}(u\times_x v)\times_{g,y} w
				\ar[]!UR;[ur]!DL \ar[]!DR;[dr]!UL && w \ar[r]^h & z\\
			&&& v \times_{g,y} w \ar[]!UR;[ur]_{g\circ \pr_1}^(.3){\ }="t"
			\ar@{=>}"s";"t"_{\rho_{(w,h)}t}
		}
	\]
	where the 2-arrow $\rho_{(w,h)}t\colon \pr_1 \Rightarrow \pr_4$  is the unique lift through $l\colon w\to y$ of
	\[
		\xymatrix{
			&& u \ar[dr]^f\\
			w\times_{y,f}(u\times_x v)\times_{g,y} w \ar[r] & u\times_x v
			\ar[]!UR;[ur]_(0.7){\ }="s"
			\ar[]!DR;[dr]^(0.7){\ }="t"
			&& y\;.\\
			&& v \ar[ur]_g
			\ar@{=>}"s";"t"^t
		}
	\]

\end{definition}

\begin{prop}
	\label{prop:right_whisker_functorial}

	Right whiskering with $y\xleftarrow{l} w \xrightarrow{h} z$ is a functor $K_\J(x,y) \to K_\J(x,z)$.

\end{prop}

\begin{proof}

	First, let us show right whiskering preserves identity 2-arrows. 
	That is, the horizontal composition of a pair of identity 2-arrows is the identity 2-arrow of the composition of the 1-arrows. 
	Let $x\xleftarrow{j}u \xrightarrow{f} y$ be a fraction and consider the right whiskering of the map $\id_{(j,f)}$ by $(l,h)$. 
	This is the map of fractions given by
	\begin{equation}\label{eq:right_whiskered_identity}
		\vcenter{\xymatrix{
			x & \ar[l]
			w\times_y u^{[2]} \times_y w  
			\ar@/^.5pc/[]!UR;[r]!UL_{\ }="s"^{\pr_1} \ar@/_.5pc/[]!DR;[r]!DL^{\ }="t"_{\pr_4}
			& w \ar[r]^h & z
			\ar@{=>}"s";"t"
		}}
	\end{equation}
	where the 2-arrow is the unique lift of
	\[
		\xymatrix{
			w\times_y u^{[2]} \times_y w \ar[r] & u^{[2]} 
			\ar@/^/[]!UR;[r]!UL_{\ }="s"^{\pr_1} 
			\ar@/_/[]!DR;[r]!DL^{\ }="t"_{\pr_2} &
			u \ar[r]^f & y\;,
			\ar@{=>}"s";"t"
		}
	\]
	the unlabelled maps being the obvious projections. 
	But we have the equality
	\[
		\vcenter{\xymatrix{
			w\times_y u^{[2]} \times_y w 
			\ar@/^/[]!UR;[r]!UL_{\ }="s"^{\pr_1} 
			\ar@/_/[]!DR;[r]!DL^{\ }="t"_{\pr_4} &
			w
			\ar@{=>}"s";"t"
		}}
		\equals
		\vcenter{\xymatrix{
			w\times_y u^{[2]} \times_y w 
			\ar@/^/[]!UR;[r]!UL_{\ }="s"^{\pr_{12}} 
			\ar@/_/[]!DR;[r]!DL^{\ }="t"_{\pr_{34}} &
			u\times_x y \ar[r]^-{f\circ \pr_1} & w
			\ar@{=>}"s";"t"
		}}\,
	\]
	hence (\ref{eq:right_whiskered_identity}) is 
	\[
		\xymatrix{
			x & \ar[l] 
			w\times_y u^{[2]} \times_y w 
			\ar@/^/[]!UR;[r]!UL_{\ }="s"^{\pr_{12}} 
			\ar@/_/[]!DR;[r]!DL^{\ }="t"_{\pr_{34}} 
			\ar@{=>}"s";"t" &
			u\times_y w \ar[r]^-{h\circ f \circ \pr_1}&
			z
		}
		\equals
		\id_{(l,h)\circ(j,f)}\,.
	\]
	Thus whiskering is unital. 

	Now to prove that right whiskering preserves composition we will again use uniqueness of descent, and prove equal a pair of 2-arrows with  0-source a cover of the 0-source of the 2-arrows we are interested in. 
	Without loss of generality, we can right whisker by the fraction $(l,\id) = y\xleftarrow{l} w \xrightarrow{\id} w$, as the component of the fraction pointing in the forward direction plays no substantial r\^ole in what is to follow.

	Consider the composable pair of maps of fractions given by the data
	\[
		\xymatrix{
			u_1\times_x u_2 
			\ar@/^.5pc/[]!UR;[r]!UL_{\ }="s1"^{f_1\circ \pr_1}
			\ar@/_.5pc/[]!DR;[r]!DL^{\ }="t1"_{f_2\circ \pr_2}
			\ar@{=>}"s1";"t1"^{a_1}
			& y
		} \quad \text{ and } \quad
		\xymatrix{
			 u_2\times_x u_3 
			\ar@/^.5pc/[]!UR;[r]!UL_{\ }="s2"^{f_2\circ \pr_1}
			\ar@/_.5pc/[]!DR;[r]!DL^{\ }="t2"_{f_3\circ \pr_2}
			\ar@{=>}"s2";"t2"^{a_2}
			& y
		}\,.
	\]
	Let $u_{123}:= u_1\times_x u_2\times_x u_3$ and similarly for $u_{12}$, $u_{23}$, and consider the diagram
	\[
		\vcenter{\xymatrix{
			&u_{12} \ar[dr]^(.8){\ }="t1" \ar[r]_(.8){\ }="s1" & u_1 \ar[dr]^{f_1}\\
			u_{123} \ar[ur] \ar[dr] && u_2 \ar[r]^{f_2} & y & \ar[l] w\times_y w
			\ar@/^1pc/[r]_{\ }="s"^{\pr_1} 
			\ar@/_1pc/[r]^{\ }="t"_{\pr_2} 
			\ar@{=>}"s";"t" & w  \\
			&u_{23} \ar[ur]_(.8){\ }="s2" \ar[r]^(.8){\ }="t2" & u_3 \ar[ur]_{f_3}
			\ar@{=>}"s1";"t1"^{a_1}
			\ar@{=>}"s2";"t2"^{a_2}
		}}\,.
	\]
	We need to prove equal the pair of 2-arrows $(\rho_{(l,\id)} a_1) + (\rho_{(l,\id)} a_2)$ and $\rho_{(l,\id)} (a_1 + a_2)$ between the two 1-arrows $(w\times_y u_1)\times_x (u_3\times_y w) \simeq w\times_y u_{13} \times_y w \stackrel{\pr_i}{\longrightarrow} w$, for $i=1,3$.

	\begin{figure}[!t]
		\[
			\xymatrix{
				& 
				\text{\footnotesize{$w\times_y u_{12} \times_y w$ }}
				  \ar[dr]!UL
				  \ar[]!R;[drrr]!UL_(.37){\ }="s2"^(0.3){\pr_1} 
				  \ar[dddr]|!{[dd];[dr]!DL}\hole|!{[dd]!R;[drrr]}\hole \\
				w\times_y u_{123} \times_y w  
				  \ar[dddr] 
				  \ar[dddd]_p 
				  \ar[]!UR;[ur]!DL
				  \ar[]!DR;[dr]!UL &
				& 
				w\times_y u_2 \times_y w 
				  \ar[rr]^(0){\ }="t2"_(0){\ }="s3" &
				& 
				w 
				  \ar[ddd] \\
				& 
				\text{\large{$w\times_y u_{23} \times_y w$}}
				  \ar[ur]!DL
				  \ar[]!R;[urrr]!DL^(.37){\ }="t3"_(0.3){\pr_3}
				  \ar[dddr] & 
				 \\
				&
				& 
				\text{\footnotesize{$u_{12}$}} 
				  \ar[r]_(.8)\hole="s4" 
				  \ar[dr]^(.87)\hole="t4" & 
				\text{\footnotesize{$u_1$}} 
				  \ar[dr]|{f_1} \\
				& 
				u_{123} 
				  \ar[ur]|!{[uu];[dr]}\hole 
				  \ar[dr] 
				  \ar[ddr] &
				& 
				u_2 
				  \ar[r]|{f_2} & 
				y 
				  \ar@{=}[dd] \\
				w\times_y u_{13} \times_y w 
				  \ar[drr]!L 
				  \ar@{-->}@/_2.5pc/[]!R;[rrrruuuu]!DL_(0.2){\pr_3}^(0.17)\hole="t6" 
				  \ar@{-->}@/^1pc/[rrrruuuu]!DL^(0.2){\pr_1}_(0.22)\hole="s6" &
				& 
				\text{\large{$u_{23}$}}
				  \ar[ur]_(.87)\hole="s5" 
				  \ar[r]^(.9)\hole="t5" & 
				\text{\large{$u_3$}}
				  \ar[ur]|{f_3}\\
				&
				&
				u_{13} 
				  \ar@/^1pc/[rr]_(.35){\ }="s1" 
				  \ar@/_1pc/[rr]^(.35){\ }="t1" &
				& 
				y
				\ar@{=>}"s1";"t1"^{\,a_1+a_2}
				\ar@{=>}"s2";"t2"^{\rho_{(l,\id)} a_1}
				\ar@{=>}"s3";"t3"^{\rho_{(l,\id)} a_2}
				\ar@{=>}"s4";"t4"^{a_1}
				\ar@{=>}"s5";"t5"^{a_2}
				\ar@{=>}"s6";"t6"_(0.35){(\ast)}
			}
		\]
		\caption{Right whiskering is functorial}
		\label{fig:right_whiskering_functorial}
		\end{figure}

		In Figure \ref{fig:right_whiskering_functorial} the sub-diagram consisting of just the solid arrows together with the 2-arrows between them 2-commutes, so the precomposition $(\rho_{(l,\id)} a_1) \oplus (\rho_{(l,\id)} a_2)$ is given by the top layer of the diagram, namely
		\[
			\xymatrix{
				& 
				\text{\footnotesize{$w\times_y u_{12} \times_y w$ }}
				  \ar[dr]!UL
				  \ar[]!R;[drrr]!UL_(.37){\ }="s2"^(0.3){\pr_1} 
				   \\
				w\times_y u_{123} \times_y w  
				  \ar[]!UR;[ur]!DL
				  \ar[]!DR;[dr]!UL &
				& 
				w\times_y u_2 \times_y w 
				  \ar[rr]^(0){\ }="t2"_(0){\ }="s3" &
				& 
				w  \\
				& 
				\text{\large{$w\times_y u_{23} \times_y w$}}
				  \ar[ur]!DL
				  \ar[]!R;[urrr]!DL^(.37){\ }="t3"_(0.3){\pr_3}
				   & 
				\ar@{=>}"s2";"t2"^(0.65){\rho_{(l,\id)} a_1}
				\ar@{=>}"s3";"t3"^{\rho_{(l,\id)} a_2}
				 }
			\] 
		and $(\rho_{(l,\id)} a_1) + (\rho_{(l,\id)} a_2)$ is given by the unique descent of this 2-arrow along $p$.
		The 2-arrow marked $(\ast)$ is the whiskering $\rho_{(l,\id)} (a_1 + a_2)$, and forms a 2-commuting diagram with $a_1+a_2$ and the 1-arrows $w\times_y u_{13} \times_y w \to u_{13}$ and $w\to y$. 
		The 2-cell
		\[
			\xymatrix{
				w\times_y u_{123} \times_y w \ar[r] & 
				w\times_y u_{13} \times_y w 
				\ar@{-->}@/^1.3pc/[]!UR;[rr]_(0.4)\hole="s"^{\pr_1}
				\ar@{-->}@/_1.3pc/[]!DR;[rr]^(0.4)\hole="t"_{\pr_3} &&
				w
				\ar@{=>}"s";"t"^{(\ast)}
			}
		\]
		in Figure~\ref{fig:right_whiskering_functorial} is, by uniqueness of lifts through $p$ and $w\to y$ (both in $\J$) equal to $(\rho_{(l,\id)} a_1) \oplus (\rho_{(l,\id)} a_2)$. 
		Thus the descent of $(\rho_{(l,\id)} a_1) \oplus (\rho_{(l,\id)} a_2)$ along $p$ is just $\rho_{(l,\id)} (a_1 + a_2)$, which is what we needed to prove.
\end{proof}

The definition of the left whiskering is slightly more complicated, as it is such that it doesn't permit us to nearly ignore half of the span as we can for right whiskering. 
What we shall do is define left whiskering by a general $\J$-fraction $x\xleftarrow{j}u\xrightarrow{f}y$ in two cases, using the factorisation $(j,f) = (\id_u,f)\circ (j,\id_u)$.

\subsubsection{Case I: left whiskering by $(\id_u,f)$}

Let $a$ be a map of fractions from $y\leftarrow v_1 \xrightarrow{g} z$ to $y\leftarrow v_2 \xrightarrow{h} z$, and  $f\colon u\to y$ an arrow in $K$. 
The whiskered 2-arrow will be a map of fractions from $u \xleftarrow{\pr_1} u\times_y v_1 \xrightarrow{g\circ \pr_2} z$ to $u \xleftarrow{\pr_1} u\times_y v_2 \xrightarrow{h\circ \pr_2} z$, and so the desired 2-arrow in $K$ will be of the form 
\[
	\xymatrix{
			u\times_y v_{12} 
			\ar@/^.5pc/[]!UR;[r]!UL_{\ }="s1"^{g\circ \pr_1}
			\ar@/_.5pc/[]!DR;[r]!DL^{\ }="t1"_{h\circ \pr_2}
			\ar@{=>}"s1";"t1"
			& z
		}
\]
 where $v_{12} := v_1 \times_y v_2$.

\begin{definition}\label{def:left whiskerin case I}

	The left whiskering of the map $a$ of $\J$-fractions by the fraction $u\xleftarrow{\id_u} u \xrightarrow{f}y$ is given by the 2-arrow $\lambda^I_{(\id_u,f)} a$, defined as
	\[
		\xymatrix{
			 u\times_y v_{12} \ar[r] & v_{12} \ar@/^1pc/[rr]_{\ }="s" \ar@/_1pc/[rr]^{\ }="t" && z
			\ar@{=>}"s";"t"_a
		}\;.
	\]
\end{definition}

\subsubsection{Case II: left whiskering by $(j,\id_u)$}

We have $v_i\to y=u$ $\J$-covers for $i=1,2$, and now here $v_{12} := v_1\times_u v_2$.
We also let $V_{12} := v_1 \times_x v_2$, and there is a canonical map $v_{12}\to V_{12}$ fitting into a commutative diagram
\[
	\xymatrix{
		v_{12} \ar[r]\ar[d] & V_{12} \ar[d]\\
		u \ar[r]_j & x
	}
\]
where the left, bottom and right arrows are all in $\J$. 
Thus from Lemma~\ref{lemma:stronger_co-ff} we have that the top arrow is co-ff.
Notice also that there is a trivial factorisation of $\pr_i\colon v_{12} \to v_i$ as $v_{12} \to V_{12} \xrightarrow{\pr_i} v_i$.

\begin{definition}\label{def:left whiskering case II}
The left whiskering of the map $a$ by $x\xleftarrow{j}u \xrightarrow{\id} u$ is given by the 2-arrow $\lambda^{II}_{(j,\id)}a$ in $K$ defined via unique descent along the co-ff arrow $v_{12}\times_x V_{12} \to V_{12}$ by the equation
\[
	\diagram{
		v_{12}\times_x V_{12} \ar[r]^-{\pr_2} & 
		V_{12} 
			\ar@/^1.5pc/[rr]_(.65){\ }="s"^{g\circ \pr_1} 
			\ar@/_1.5pc/[rr]^(.65){\ }="t"_{h\circ \pr_2} && 
		z \ar@{=>}"s";"t"_{\lambda^{II}_{(j,\id)}a}
	}
	\equals
	\diagram{
		&& V_{12} \ar[dr]_{\ }="s2"^{g\circ \pr_1} \\
		v_{12}\times_x V_{12} \ar@/^/[urr]^{\pr_2}_{\ }="s1" 
			\ar@/_/[drr]_{\pr_2}^{\ }="t3" \ar[r]^(.85){\ }="t1"_(.85){\ }="s3" & 
			v_{12} \ar[ur] \ar[dr] && z\\
		&& V_{12} \ar[ur]^{\ }="t2"_{h\circ\pr_2}
		\ar@{=>}"s1";"t1"
		\ar@{=>}"s2";"t2"_a
		\ar@{=>}"s3";"t3"
	}\;.
\]
\end{definition}

Left whiskering by an arbitrary fraction $x\xleftarrow{j} u \xrightarrow{f} y$ will then be the composite of the two (putative) functors given by cases I and II.

\begin{prop}\label{prop:left_whisker_functorial}

	Left whiskering with $x\xleftarrow{j} u \xrightarrow{f} y$ is a functor $K_{\J}(y,z) \to K_{\J}(x,z)$.

\end{prop}

The proof that left whiskering preserves (vertical) composition will be deferred to appendix \ref{app:left_whiskering_functorial}, as it is a sizable calculation.

\begin{proof}{(Left whiskering is unital)}
	We want to do the whiskering 
	\[
		\xymatrix{
			x & \ar[l]_j u \ar[r]^f & y & \ar[l] 
			v\times_y v 
			  \ar@/^.5pc/[]!UR;[r]!UL_{\ }="s" 
			  \ar@/_.5pc/[]!DR;[r]!DL^{\ }="t" 
			& v \ar[r]^g &z
			\ar@{=>}"s";"t"^a
		}\,.
	\]
	Note that without loss of generality we can assume $g=\id_v$, the general case follows exactly the same argument merely with $g$ right whiskered onto all the 2-cells involved. 
	We treat case I and case II of the definition of left whiskering separately.
	\begin{itemize}
	\item[Case I.] Note that $v_{12}$ in this case is $v\times_y v$.
	The left whiskering of the identity map on $y\leftarrow v \xrightarrow{\id} v$ has 2-cell component
	\[
		\xymatrix{
			u\times_y (v\times_y v) \ar[r] & v \times_y v \ar@/^.5pc/[]!UR;[r]!UL_{\ }="s" \ar@/_.5pc/[]!DR;[r]!DL^{\ }="t" & v
			\ar@{=>}"s";"t"^a
		}
	\]
	but $u\times_y (v\times_y v)\simeq (u\times_y v) \times_u (u\times_y v)$, and by Lemma~\ref{lemma:unique_lifts} this is equal to
	\[
		\xymatrix{
			(u\times_y v) \times_u (u\times_y v) \ar@/^.5pc/[]!UR;[r]!UL_{\ }="s" \ar@/_.5pc/[]!DR;[r]!DL^{\ }="t" & u \times_y v  \ar[r] & v
			\ar@{=>}"s";"t"^a
		}
	\]
	and this is the identity map on the composite $u \leftarrow u\times_y v \to v$, as required.

	\item[Case II.] Again, in this case, $v_{12} = v\times_u v$, which for now will be denoted $v^{[2]}$ and $V_{12} = v\times_x v$.
	Recall that the 2-cell component of the whiskered identity map will be the unique 2-cell $\lambda :=\lambda^{II}_{(j,\id)}a$ in the diagram
	\[
	\hspace{1000pt minus 2000pt}
	\diagram{
		v^{[2]}\times_x (v\times_x v) \ar[r] & v\times_x v \ar@/^1.5pc/[r]_(.5){\ }="s"^{\pr_1} \ar@/_1.5pc/[r]^(.5){\ }="t"_{\pr_2} & v \ar@{=>}"s";"t"^{\lambda}
	}
	\equals
	\diagram{
		&& v\times_x v \ar[dr]_{\ }="s2"^{\pr_1} \\
		v^{[2]}\times_x (v\times_x v) \ar@/^/[urr]_{\ }="s1" 
			\ar@/_/[drr]^{\ }="t3" \ar[r]^(.85){\ }="t1"_(.85){\ }="s3" & 
			v^{[2]} \ar[ur] \ar[dr] && v\\
		&& v\times_x v \ar[ur]^{\ }="t2"_{\pr_2}
		\ar@{=>}"s1";"t1"
		\ar@{=>}"s2";"t2"
		\ar@{=>}"s3";"t3"
	}\;,
	\hspace{1000pt minus 2000pt}
	\]
	which exists by descent along the $\J$-cover $v^{[2]}\times_x (v\times_x v) \to v\times_x v$. 
	However, by Lemma~\ref{lemma:unique_lifts} the canonical 2-arrow $\pr_1 \Rightarrow \pr_2\colon v\times_x v\to v$ fits into such an equation of 2-arrows, and this is none other than the 2-cell component of the identity 2-arrow on the composite fraction $x\xleftarrow{j} v \xrightarrow{\id} v$. 

	\end{itemize}

	Putting case I and case II together, we have that left whiskering $\lambda_{(j,f)}\colon K_\J(y,z) \to K_\J(x,z)$ preserves identity maps.
\end{proof}

With Propositions \ref{prop:left_whisker_functorial} and \ref{prop:right_whisker_functorial} we can define a composition functor.

\begin{lemma}
Left and right whiskering fit together to give a functor
\[
	K_\J(x,y)\times K_\J(y,z) \to K_\J(x,z).
\]
\end{lemma}
\begin{proof}
The only thing that remains to check is that middle-four interchange holds, as per the hypothesis of \cite[Proposition~II.3.1]{CWM}. 
This proof is deferred to Appendix~\ref{app:middle_four}.
\end{proof}

In order for this to be the composition functor for a bicategory we just need to now show that it is coherently associative and unital. 
In fact, by virtue of Definition~\ref{def:strict_pretopology}, this composition is \emph{strictly} unital, since the composition of any fraction with the identity fraction of its source or target is unchanged.
We define the left and right \emph{unitors} to be the appropriate identity 2-arrows.

\begin{lemma}
	The unitors are natural.
\end{lemma}

\begin{proof}
Recall that the unitors are meant to be natural transformations as in the diagram (of categories)
\[
	\xymatrix{
		K_\J(x,y) \ar@{^(->}[r] \ar[dr]_=^{\ }="t" & K_\J(x,y) \times K_\J(y,y) \ar[d]_(0.2){\ }="s" \\
		& K_\J(x,y)
		\ar@{=>}"s";"t"^{l_y}
	}
	\qquad
	\xymatrix{
		K_\J(x,y) \ar@{^(->}[r] \ar[dr]_=^{\ }="t" & K_\J(x,x) \times K_\J(x,y) \ar[d]_(0.2){\ }="s"  \\
		& K_\J(x,y)
		\ar@{=>}"s";"t"^{r_x}
	}
\]
Our unitors consist of identity arrows, so we need to prove that these diagrams commute \emph{on the nose}.
This is true at the level of objects, so we just need to check that the appropriate wiskerings of an arbitrary map of $\J$-fractions (i.e.\ an arrow in $K_\J(x,y)$ by identity fractions result in the original map of fractions.

In the case of $r_x$, we can apply Defintion~\ref{def:left whiskerin case I}, whiskering by the fraction $(\id_x,\id_x)$.
But then we just get the original map of $\J$-spans. 
Thus the left triangle above commutes and $r_x$ is natural.

In the case of $l_y$, we use Definition~\ref{def:right whiskering}, with $y\xleftarrow{l}w\xrightarrow{h} z$ being $(\id_y,\id_y)$.
But then $w\times_{y,f}(u\times_x v)\times_{g,y} w = u\times_x v$, and we are lifting through an identity arrow, and then whiskering (in $K$) with $\id_y$.
The result is then the original map of $\J$-spans, making the right triangle above commute and so $l_y$ is natural.
\end{proof}

\begin{definition}\label{def:associator}

	The \emph{associator} for the 3-tuple of composable fractions
	\[
		\xymatrix{
			& u \ar[dl]_j \ar[dr]^f&& v \ar[dl]_k \ar[dr]^g && w \ar[dl]_l \ar[dr]^h\\
			x_1 && x_2 && x_3 && x_4
		}
	\]
	is the invertible map $\iota(a_{uvw})\colon \big[(l,h)\circ(k,g)\big]\circ(j,f) \Rightarrow (l,h)\circ\big[(k,g)\circ(j,f)\big]$ of $\J$-fractions associated to the renaming map arising from the canonical isomorphism
	\[
		a_{uvw}\colon u\times_{x_2} (v \times_{x_3} w) \xrightarrow{\simeq} (u \times_{x_2} v) \times_{x_3} w
	\]
	over $x_1$, together the appropriate identity 2-arrow.

\end{definition}

\begin{lemma}\label{lemma:associator_is_natural}
	The associator 2-arrow in Definition~\ref{def:associator} is natural.
\end{lemma}

\begin{proof}
This is proved in Appendix~\ref{app:assoc_natural}.
\end{proof}

We can check that the associator satisfies the necessary coherence diagrams in the bicategory of fractions and renaming maps, since it will then hold in the bicategory of fractions and maps of fractions.
In fact, since the renaming map in question is the associator for products in the strict slice $K/x_1$ (i.e.\ strict pullbacks in $K$), it satisfies coherence by the universal property of pullbacks.

\begin{remark}

	If we do not assume that pullbacks of identity arrows are again identity
	arrows, then we do get nontrivial unitors, but they are, like the
	associator, renaming maps, and one can check they are coherent.

\end{remark}

We have thus proved:\footnote{cf Bartels, who says ``The various coherence conditions in a (weak) 2-category are now tedious but straightforward to check.'' \cite{Bartels}}

\begin{prop}

	There is a bicategory $K_\J$ with the same objects as $K$, fractions as 1-arrows and maps of fractions as 2-arrows.

\end{prop}

We now define an identity-on-objects strict 2-functor $A_\J\colon K \to K_\J$ as follows. 
For a 1-arrow $f\colon x\to y$ of $K$, let $A_\J(f)$ be the fraction $x \xleftarrow{\id_x} x \xrightarrow{f} y$. Given a 2-arrow $a\colon f \Rightarrow g\colon x\to y$ in $K$, let  $A_\J(a)$ be the map of fractions $(\id_x,f) \Rightarrow (\id_x,g)$ determined by
\[
	\xymatrix{
		x\times_x x = x
		\ar@/^.5pc/[]!UR;[r]!UL_{\ }="s"^f \ar@/_.5pc/[]!DR;[r]!DL^{\ }="t"_g
		& y
		\ar@{=>}"s";"t"^a
	}\, ,
\]
To check that $A_\J$ is a strict 2-functor, we need to check first that it is functorial for vertical composition of 2-arrows.
In the definition of vertical composition of 2-cells, the diagram~(\ref{eq:precomposition of maps of fractions}) in the case of maps of fractions in the image of $A_\J$ collapses as all objects $u_i$ and their fibre products reduce to $x$, with all arrows between them identity arrows.
The descended 2-arrow is then just the vertical composite in $K$, and so $A_\J$ preserves vertical composition. It is also simple to show that $A_\J$ preserves identity 2-arrows.

Secondly, we need to show that $A_\J$ is functorial for horizontal composition. 
Identity 1-arrows are preserved strictly, as is composition of 1-arrows, so it is just a matter of checking that horizontal composition of 2-cells is preserved.
Since horizontal composition is defined via left and right whiskering, we need to check that whiskering a map of fractions in the image of $A_\J$ by a fraction in the image of $A_\J$ is of the same form.
The right whiskering of $A_\J(a\colon f_1\Rightarrow f_2)$ where $f_1,f_2\colon x\to y$ by $y\xleftarrow{\id_y}y\xrightarrow{g} z$ involves a 2-cell $\rho_{(\id,g)}a$ (see Definition~\ref{def:right whiskering}).
Since our fractions are in the image of $A_\J$, the diagram again collapses so that all appearances of $u\times_x v$ are equal to $x$, and $w=y$, so that $\rho_{(\id,g)}a = a$, and the final result has the 2-cell component the right whiskering of $a$ by $g$.
The left whiskering we need is case I, so we consider Definition~\ref{def:left whiskerin case I}. 
Consider the map of fractions $A_\J(a\colon g_1\Rightarrow g_2)$ where $g_1,g_2\colon y\to z$ and whisker it by $x\xleftarrow{\id_x}x\xrightarrow{f}y$.
Now in the definition of the 2-cell $\lambda^I_{(\id,f)}a$, we have $v_{12} = v_1 = v_2 = y$, the maps between them are identity maps, $u=x$, and $u\times_y v_{12} \to v_{12}$ is just $f$.
Thus the whiskered map of fractions is again in the image of $A_\J$, and we have proved that $A_\J$ is a strict 2-functor.

\begin{lemma}\label{lemma:AJ locally fully faithful}

	The 2-functor $A_\J$ is locally fully faithful, that is, $K(x,y) \to K_\J(x,y)$ is fully faithful for all objects $x$ and $y$ of $K$.

\end{lemma}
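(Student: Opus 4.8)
The plan is to unwind the two hom-set maps involved and observe that the convention on pullbacks of identity arrows collapses the comparison to a triviality; functoriality of $A_\J$ on each $K(x,y)$ is already part of its being a strict $2$-functor, so only bijectivity on hom-sets remains. First I would recall how $A_\J$ acts on the category $K(x,y)$: on objects it sends a $1$-arrow $f\colon x\to y$ to the fraction $A_\J(f)=(\id_x,f)$, and on a $2$-arrow $a\colon f\Rightarrow g$ it produces the map of fractions whose $2$-cell component is $a$ itself. The content of the lemma is then that the induced function
\[
	K(x,y)(f,g) \To K_\J(x,y)\big(A_\J(f),A_\J(g)\big)
\]
is a bijection for every pair $f,g\colon x\to y$.

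Next I would compute the right-hand hom-set directly from the definition of a map of fractions. A map $(\id_x,f)\Rightarrow(\id_x,g)$ is, by definition, a $2$-arrow $f\circ\pr_1\Rightarrow g\circ\pr_2$ out of the pullback $x\times_x x$, where $\pr_1,\pr_2$ are its two projections to $x$. Here is where I invoke Definition~\ref{def:strict_pretopology}: since $j=k=\id_x$ and the specified pullback of an identity $1$-arrow is again an identity, we have $x\times_x x = x$ with $\pr_1=\pr_2=\id_x$. Consequently $f\circ\pr_1=f$ and $g\circ\pr_2=g$, so the data of such a map of fractions is \emph{exactly} a $2$-arrow $f\Rightarrow g$ in $K$, with no auxiliary choices.

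Finally I would match the two descriptions. Under this identification the map of fractions $A_\J(a)$ has $2$-cell component $a$, so the induced function above is literally the assignment $a\mapsto a$; faithfulness holds because equality of maps of fractions is equality of their $2$-cell components, and fullness holds because every $2$-arrow $f\Rightarrow g$ arises as the component of some map of fractions. I do not expect any substantive obstacle: the only point needing care is to confirm that a map of fractions carries no data beyond its $2$-cell component and is not subject to an equivalence relation, so that the collapse $x\times_x x=x$ yields a genuine bijection of sets rather than merely a surjection. Everything else is immediate from the conventions, and in particular neither descent (Lemma~\ref{lemma:descending_2-arrows}) nor the unique-lift lemma (Lemma~\ref{lemma:unique_lifts}) is required.
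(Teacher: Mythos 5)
Your argument is correct and is exactly the paper's proof, merely spelled out in more detail: the paper's one-line proof says that a map of $\J$-spans $(\id_x,f)\to(\id_x,g)$ is precisely the same data as a 2-arrow $f\Rightarrow g$ in $K$, which is the collapse $x\times_x x = x$ you carry out via the convention on pullbacks of identity arrows. Nothing further is needed.
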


\begin{proof}
	A map of $\J$-fractions $(\id_x,f) \Rightarrow (\id_x,g)$ is precisely the same data as a 2-arrow $f\Rightarrow g$ in $K$.
\end{proof}

\begin{definition}

	Given $\J$, a 1-arrow in $q\colon x\to y$ in $K$ is \emph{$\J$-locally split} if there is an arrow $j\colon u\to y$ in $\J$ and a diagram of the form
	\[
		\vcenter{\xymatrix{
			& x\ar[d]^q \\
			u \ar[r]^(.7){\ }="t"_j \ar[ur]_(.7){\ }="s"^s & y 
			\ar@{=>}"s";"t"
		}}
	\]
	with the 2-arrow invertible.
	A 1-arrow in $K$ is a \emph{weak equivalence} if it is ff and $\J$-locally split. 
	Denote the class of weak equivalences by $W_\J$.

\end{definition}

Clearly $\J \subset W_\J$ as we are assuming all arrows in $\J$ are ff, and every arrow in $\J$ is trivially $\J$-locally split.

\begin{prop}
	\label{prop:characterising_arrows_inverted_by_A}

	Let $f$ be a 1-arrow of $K$. Then $A_\J(f)$ an equivalence if and only if $f\in W_\J$.

\end{prop}

\begin{proof}

	First assume $f\colon x\to y$ is in $\J$; we will show $(\id_x,f)$ is an equivalence, with quasi-inverse $(f,\id_x)$. 
	This is because $(\id_x,f)\circ (f,\id_x)=(f,f)$, which is isomorphic to $(\id_y,\id_y)$ by the invertible map of fractions
	\[
		\xymatrix{
			&&  x \ar[dr]^f_{\ }="s" \\
			y&\ar[l]_f x \ar[ur]^{\id_x} \ar[dr]_f &  & y \\
			&&  y \ar[ur]_{\id_y}^{\ }="t"
			\ar@{=>}"s";"t"_{\id}
		}
	\]
	In the other direction, $(f,\id_y)\circ(\id_y,f) = (\pr_1,\pr_2)$, where $\pr_i\colon x\times_y x \to x$, $i=1,2$, are the projections (both of which are in $\J$).
	There is the canonical invertible 2-cell $\ell_f\colon \pr_1\Rightarrow \pr_2$, which gives an isomorphism of $\J$-fractions
	\[
		\xymatrix{
			&&  x\times_y x \ar[dr]^{\pr_2}_{\ }="s" \\
			x&\ar[l]_{\pr_1} x\times_y x \ar[ur]^{\id} \ar[dr]_{\pr_1} &  & x \\
			&&  x \ar[ur]_{\id_x}^{\ }="t"
			\ar@{=>}"s";"t"_{\ell_f}
		}
	\]

	In the other direction, let $f\colon x\to y$ be a 1-arrow in $K$ such that $(\id_x,f)$ is an equivalence in $K_\J$, i.e.~there is a $\J$-fraction $y\xleftarrow{j}u\xrightarrow{g} x$ such that
	\begin{enumerate}
		\item[1.] $(\id_x,f)\circ(j,g) \xrightarrow{\sim} (\id_y,\id_y)$
		\item[2.] $(\id_x,\id_x) \xrightarrow{\sim}(j,g)\circ(\id_x,f)$
	\end{enumerate}

	Point 1 implies that we have an isomorphism of $\J$-fractions
	\[
		\xymatrix{
			&&  u \ar[dr]^{f\circ g}_{\ }="s" \\
			y&\ar[l]_j u \ar[ur]^{\id_u} \ar[dr]_j &  & y \\
			&&  y \ar[ur]_{\id_y}^{\ }="t"
			\ar@{=>}"s";"t"_{\simeq}
		}
	\]
	The right hand half of this diagram means that $f$ is $\J$-locally split. 
	Since $(\id_x,f)$ is an equivalence it is ff in the bicategory $K_\J$. 
	Then as $A_\J$ is locally fully faithful it reflects ff 1-arrows, hence $f$ is ff in $K$.
	Thus $f$ is both $\J$-locally split and ff, hence is in $W_\J$.
\end{proof}

For a number of diverse examples of weak equivalences in various categories of internal categories and groupoids, see \cite[\S 8]{Roberts_12}.

\subsection{\texorpdfstring{$K_\J$}{K\_J} as a localisation}

Given a 2-category (or bicategory) $B$ with a class $W$ of 1-arrows, we say that a 2-functor $Q\colon B \to \widetilde{B}$ is a \emph{localisation of $B$ at $W$} if it sends the 1-arrows in $W$ to equivalences in $\widetilde{B}$ and is universal with this property. 
This latter means that for any bicategory $A$ precomposition with $Q$,
\[
	Q^* \colon \Bicat(\widetilde{B},A) \to \Bicat_W(B,A),
\]
is an equivalence of hom-bicategories, with $\Bicat_W$ meaning the full sub-bicategory on those 2-functors sending arrows in $W$ to equivalences.

\begin{theorem}
	\label{thm:anafunctors_localise}

	A 2-site $(K,\J)$ admits a bicategory of fractions for $W_\J$, and the inclusion 2-functor $A_J\colon K\to K_\J$ is a localisation at the class $W_\J$ of weak equivalences.

\end{theorem}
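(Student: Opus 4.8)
The plan is to verify the two hypotheses of Pronk's bicategorical localisation theorem \cite{Pronk_96}, namely that the class $W_\J$ admits a calculus of right fractions in Pronk's sense, and then to identify the resulting bicategory of fractions with $K_\J$. Pronk's theorem requires $W_\J$ to satisfy the following: (a) all equivalences are in $W_\J$; (b) $W_\J$ is closed under composition; (c) for every pair $f\colon x\to z$ in $K$ and $w\colon y\to z$ in $W_\J$ there is a square, commuting up to invertible 2-cell, with the arrow into $x$ lying in $W_\J$; (d) $W_\J$ satisfies a 2-out-of-3--type condition for 2-cells (the ``essential surjectivity on 2-cells'' and ``faithfulness'' conditions); and (e) $W_\J$ is closed under the relevant 2-cell isomorphisms. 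I would dispatch these using the results already in hand.

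First I would establish the elementary closure properties. Condition (a) holds because every equivalence is ff (as noted after Definition~\ref{def:ff_arrow}) and is trivially $\J$-locally split, so lies in $W_\J$; more carefully, Lemma~\ref{lemma:restricted_2of3_for_ff} gives the ff half and the local splitting is immediate from a pseudo-inverse. Condition (b), closure under composition, splits into two parts: ff is closed under composition by Lemma~\ref{lemma:restricted_2of3_for_ff}, and $\J$-local splittings compose since $\J$ is closed under composition (Definition~\ref{def:strict_pretopology}) and one can paste the two local-splitting triangles. For the essential square condition (c), given $f\colon x\to z$ and $w\in W_\J$ with local splitting over some $\J$-cover $u\to z$, I would form the strict pullback of $u\to z$ along $f$, which lies in $\J$ by the pretopology axioms, and use the local section to produce the required arrow into $x$; the key point is that the pulled-back leg into $x$ is again in $\J\subset W_\J$.

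The heart of the argument is the 2-cell conditions (d)--(e), and this is where I expect the main obstacle to lie. The faithfulness-type condition asks that for $w\in W_\J$, whiskering by $w$ is injective on 2-cells with appropriate local equality, and the essential-surjectivity condition asks that 2-cells defined after precomposition with $w$ descend; both of these are governed precisely by the strict 2-regularity of arrows in $\J$. Here I would invoke Lemma~\ref{lemma:descending_2-arrows}: since $\J\subset\ff$ and every $j\in\J$ is strictly 2-regular, $j^*\colon K(x,y)\to K(u,y)$ is fully faithful, which supplies exactly the unique descent of 2-cells needed. The subtlety is that $W_\J$ is strictly larger than $\J$ (its arrows are only \emph{locally} split, not themselves covers), so I must transfer the descent property from the cover $u\to z$ across the local section to the weak equivalence $w$ itself; this is the step requiring care, and it mirrors the passage from covers to weak equivalences used throughout the construction of $K_\J$.

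Once Pronk's conditions are verified, her comparison theorem yields a bicategory of fractions $K[W_\J^{-1}]$ together with a canonical localisation 2-functor. It then remains to exhibit an equivalence of bicategories $K_\J\simeq K[W_\J^{-1}]$ compatible with the inclusions from $K$. For this I would use Proposition~\ref{prop:characterising_arrows_inverted_by_A}, which shows $A_\J$ inverts exactly $W_\J$, together with the universal property: $A_\J\colon K\to K_\J$ sends $W_\J$ to equivalences, so by universality it factors through $K[W_\J^{-1}]$, and conversely every fraction $(j,f)$ is by construction the composite $A_\J(f)\circ A_\J(j)^{-1}$ of an image arrow and a formal inverse of a weak equivalence, giving essential surjectivity of the comparison functor on 1-arrows. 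Local fully-faithfulness of the comparison functor follows from Lemma~\ref{lemma:AJ locally fully faithful} together with the description of maps of fractions. The author remarks that a direct proof should be possible, but routing through Pronk is the economical path, with the transfer of descent from $\J$ to $W_\J$ being the one genuinely delicate ingredient.
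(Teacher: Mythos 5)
Your route is, in its load-bearing part, the same as the paper's: the paper also proves the localisation statement by verifying the hypotheses of Pronk's comparison theorem \cite[Proposition~24]{Pronk_96} for $A_\J\colon K\to K_\J$, namely that $A_\J$ inverts $W_\J$ (Proposition~\ref{prop:characterising_arrows_inverted_by_A}), is surjective on objects, is locally fully faithful (Lemma~\ref{lemma:AJ locally fully faithful}), and that every fraction $(j,f)$ decomposes as $A_\J(f)\circ\overline{A_\J(j)}$ --- exactly the three checks in your final paragraph. Where you diverge is on the first clause of the theorem: you propose to verify from scratch that $W_\J$ admits a calculus of fractions in Pronk's sense (your conditions (a)--(e)), whereas the paper simply cites \cite[Theorem~6]{Roberts_14a} for this and does not reprove it. Your sketches of the closure and square conditions are fine (for closure of local splittings under composition you will need a pullback of one cover along the other's section, not just pasting, but that is routine). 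The step you yourself flag as delicate --- descending and cancelling 2-cells along an arbitrary $w\in W_\J$ rather than along a cover in $\J$, via the local section --- is precisely the content of the cited result, and your proposal stops at announcing it rather than carrying it out; Lemma~\ref{lemma:descending_2-arrows} alone handles covers, not general weak equivalences. So relative to what the paper actually proves, your argument is the same and complete; relative to a self-contained proof, the transfer of the 2-cell conditions from $\J$ to $W_\J$ remains an open gap in your write-up that you would need to fill or cite.
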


\begin{proof}
That $(K,\J)$ admits a bicategory of fractions for $W_\J$ is \cite[Theorem 6]{Roberts_14a} (the weaker hypotheses there on 2-sites are implied by the ones here).
The proof that $A_\J$ is a localisation proceeds via Pronk's comparison theorem \cite[Proposition~24]{Pronk_96}, the conditions of which imply that the canonical 2-functor $K[W_\J^{-1}] \to K_\J$ is an equivalence of bicategories.
Here $K[W_\J^{-1}]$ is the bicategory of fractions constructed by Pronk, and we recall the conditions of the comparison theorem for ease of reference, using the current notation:
\begin{enumerate}
	\item[EF1.]
               $A_\J$ is essentially surjective,
       \item[EF2.]
               For every 1-arrow $f$ of $K_\J$ there are 1-arrows $w \in W_\J$ and $g$ of $K$ such that
               $A_\J(g) \stackrel{\sim}{\Rightarrow} f \circ A_\J(w)$,
       \item[EF3.]
               $A_\J$ is locally fully faithful.
\end{enumerate}
We now show these conditions hold. 
To begin with, the 2-functor $A_\J$ sends weak equivalences to equivalences by Proposition~\ref{prop:characterising_arrows_inverted_by_A}.

	\begin{enumerate}[EF1.]
		\item
			$A_\J$ is the identity on objects, and hence surjective on objects.

		\item
			This is equivalent to showing that for any $\J$-fraction $x\xleftarrow{j} u\xrightarrow{f} y$ there are 1-arrows $w,g$ in $K$ such that $w$ is in $W_\J$ and
			\[
				(j,f) \stackrel{\sim}{\Rightarrow} A_\J(g)\circ\overline{A_\J(w)}
			\]
			where $\overline{A_\J(w)}$ is some pseudoinverse for $A_\J(w)$. 
			We can take $w=j$ and $g=f$, since by the proof of Proposition~\ref{prop:characterising_arrows_inverted_by_A}, $(j,\id_u)$ is a pseudoinverse for $(\id_u,j)$, and the composite fraction of $(j,\id_u)$ and $(\id_u,f)$ is just $(j,f)$.

		\item
			This holds by Lemma~\ref{lemma:AJ locally fully faithful}.
	\end{enumerate}

\noindent Thus $A_\J$ is a localisation of $K$ at $W_\J$.
\end{proof}

As a last remark, one would like to know if the localisation of $K$ at the weak equivalences is locally essentially small. 
This can be assured by the following result, where we have used the condition WISC from \cite{Roberts_12}, which states that every object $x$ of $K$ has a set of covers that are weakly initial in the subcategory of $K/x$ on the $\J$-covers.

\begin{prop}
	\label{prop:WISC_gives_locally_small}

	If the locally essentially small 2-site $(K,\J)$ satisfies WISC, then $K_\J$ is locally essentially small, and hence so is any localisation of $K$ at $W_\J$.

\end{prop}

Notice that local essential smallness in \emph{not} automatic, as there are well-pointed toposes with a natural numbers object, otherwise very nice categories, for which the 2-category of internal categories fails the hypothesis of Proposition~\ref{prop:WISC_gives_locally_small}. 
For example the toposes of material sets in models of ZF as given by Gitik (see \cite{vdBerg-Moerdijk_14}) and Karagila \cite{Karagila_14}, or the well-pointed topos of structural sets arising from \cite{Roberts_15a}.
Karagila has also described an explicit model of ZF in which the category of anafunctors from the discrete groupoid $\mathbb{N}$ to the one-object groupoid $\mathbf{B}(\mathbb{Z}/2)$ is not essentially small.\footnote{See the answers to the MathOverflow question \url{https://mathoverflow.net/q/264585/}. User `aws' also gave a model of ZFA---choiceless Zermelo--Fraenkel set theory with atoms---with the same property.}

Finally, note that nothing in this paper relies on $K$ being a (2,1)-category, namely a 2-category with only invertible 2-arrows. 
This is usually assumed for results subsumed by Theorem~\ref{thm:anafunctors_localise}, but is unnecessary in the framework presented here. 
The following example will be treated in a forthcoming paper.

\begin{example}
Take a pretopos $E$ with stable reflexive coequalisers, and define $K$ to be the wide, locally full sub-2-category of $\Cat(E)^{op}$ taking only those internal functors whose object component is a coproduct inclusion, which we shall call a \emph{complemented cofibration}.
Let $\J$ consist of the class $W$ of complemented cofibrations $f\colon X\to Y$ that are ff and essentially surjective (i.e.\ $X_0\times_{f_0,Y_0,s} Y_1 \xrightarrow{t\pr_2} Y_0$ is a regular epimorphism).
Then $W$  consists of ff and co-ff arrows, contains all identity arrows, and is closed under composition and---most crucially---pushout. 
This makes $(\Cat(E)^{op},W)$ a 2-site as defined in this paper, and the constructions here involving $W$-fractions in $\Cat(E)^{op}$ correspond to analogous dual constructions involving \emph{left} $W$-fractions (certain cospans) in $\Cat(E)$.
The analogous result holds for $\Gpd(E)$ in place of $\Cat(E)$, and in this case $E$ can be an arbitrary pretopos.
\end{example}

This gives a bicategorical perspective on a generalisation of the case of small \emph{groupoids}, studied in \cite{LandNikolausSzumilo} using cofibration categories as a presentation of $(\infty,1)$-categories.

\appendix

\section{Proof that left whiskering in \texorpdfstring{$K_\J$}{K\_J} 
preserves vertical composition}
	\label{app:left_whiskering_functorial}


The definition of left whiskering in $K_\J$ is slightly more complicated, as it is
such that it doesn't permit us to ignore half of the span as we
can for right whiskering. Revall that we define left whiskering by a general $\J$-fraction $x\xleftarrow{j}u\xrightarrow{f} y$ in two cases, using the factorisation $(j,f) = (\id_u,f)\circ (j,\id_u)$.

\subsection{Case I: left whiskering by $(\id_u,f)$}


Recall the definition of case I of left whiskering (Definition~\ref{def:left whiskerin case I}).

\begin{definition}

	The left whiskering of $a\colon (j,g) \Rightarrow (k,h)$ by $(\id_u,f)$ is given by the 2-arrow $\lambda^I_{(\id,f)} a$, defined as
	\[
		\xymatrix{
			u\times_y v_{12} \ar[r] & v_{12} \ar@/^1pc/[rr]_{\ }="s" \ar@/_1pc/[rr]^{\ }="t" && z
			\ar@{=>}"s";"t"_a
		}\;.
	\]
\end{definition}

We now show left whisking by $(\id,f)$ preserves composition.
In the following, let $\lambda^I (-) = \lambda^I_{(\id,f)} (-)$

\begin{align*}
	\diagram{
		u\times_yv_{123} \ar[dd] \\
		\\
		u\times_yv_{13} \ar@/^1.5pc/[rrr]_(.65){\ }="s" \ar@/_1.5pc/[rrr]^(.65){\ }="t" &&& z\\
		\\
		\ar@{=>}"s";"t"_{\lambda^I(a_1+a_2)}
	}
	& \equals 
	\diagram{
		u\times_yv_{123} \ar[dd] \\
		\\
		u\times_yv_{13} \ar[dd]\\
		\\
		v_{13} \ar@/^1.5pc/[rrr]_(.55){\ }="s" \ar@/_1.5pc/[rrr]^(.55){\ }="t" &&& z
		\ar@{=>}"s";"t"_{a_1+a_2}
	}\\ 
	& \equals 
	\diagram{
		u\times_yv_{123} \ar[dd] \\
		\\
		v_{123} \ar[dd]\\
		\\
		v_{13} \ar@/^1.5pc/[rrr]_(.55){\ }="s" \ar@/_1.5pc/[rrr]^(.55){\ }="t" &&& z
		\ar@{=>}"s";"t"_{a_1+a_2}
	}\\ 
	& \equals 
	\diagram{
		u\times_y v_{123} \ar[dd] \\
		& v_{12} \ar[dr]_{\ }="s1" \ar[r]& v_1 \ar[dr]_(.15){\ }="s2"&\\
		v_{123} \ar[ur] \ar[dr] && v_2 \ar[r]^(.1){\ }="t2"_(.1){\ }="s3" & z\\
		& v_{23} \ar[ur]^{\ }="t1" \ar[r] &v_3 \ar[ur]^(.15){\ }="t3"&
		\ar@{=>}"s2";"t2"_{a_1}
		\ar@{=>}"s3";"t3"_{a_2}\\
		\\
	}\\ 
	& \equals
	\diagram{
		&u\times_y v_{12} \ar[dd] \ar[r]  \ar[dr]& u\times_y v_1 \ar[dr]_(.2){\ }="s1"\\
		u\times_y v_{123} \ar[dd] \ar[ur] && u\times_y v_2 \ar[dd] \ar[r]^(.1){\ }="t1" & z 
		\ar@{=}[dd]\\
		&v_{12} \ar[dr]&&\\
		v_{123} \ar[ur] \ar[dr] & & v_2 \ar[r]_(.1){\ }="s2" & z\\
		&v_{23} \ar[r] \ar[ur]& v_3\ar[ur]^(.2){\ }="t2" &
		\ar@{=>}"s1";"t1"_{\lambda^I a_1}
		\ar@{=>}"s2";"t2"_{a_2}
	}\\
	& \equals
	\diagram{
		&u\times_y v_{12} \ar[r]  \ar[dr]& u\times_y v_1 \ar[dr]_(.2){\ }="s1"\\
		u\times_y v_{123} \ar[ur] \ar[dr] && u\times_y v_2  \ar[r]^(.1){\ }="t1"_(.1){\ }="s2" & z\\ 
		& u\times_y v_{23} \ar[ur] \ar[r] & u\times_y v_2 \ar[ur]^(.2){\ }="t2" &
		\ar@{=>}"s1";"t1"_{\lambda^I a_1}
		\ar@{=>}"s2";"t2"_{\lambda^I a_2}
	}\\ 
	& \equals
	\vcenter{\xymatrix{
		u\times_y v_{123} \ar[dd] \\
		\\
		u\times_y v_{13} \ar@/^1.5pc/[rrr]_(.65){\ }="s" \ar@/_1.5pc/[rrr]^(.65){\ }="t" &&& z
		\ar@{=>}"s";"t"_{\lambda^I a_1+\lambda^I a_2}
	}}
\end{align*}

By uniqueness of descent, $\lambda^I(a_1+a_2) = \lambda^I a_1 + \lambda^I
a_2$.

 \subsection{Case II: left whiskering by $(j,\id_u)$}

Recall the notations $V_{12} := v_1 \times_x v_2$,  $V_{ij\ldots} := v_i\times_x v_j \times_x \ldots$. and the canonical maps $v_{ij\ldots} \to V_{ij\ldots}$. 
Recall the definition of case II of left whiskering (Definition \ref{def:left whiskering case II}):

\begin{definition}
The left whiskering of $a\colon (j,g) \Rightarrow (k,h)$ by $(j,\id_u)$  is given by the 2-arrow $\lambda^{II}_{(j,\id)}a$ in $K$ defined via unique descent by the equation
\[
	\diagram{
		v_{12}\times_x V_{12} \ar[r]^-{\pr_2} & V_{12} \ar@/^1.5pc/[rr]_(.65){\ }="s" \ar@/_1.5pc/[rr]^(.65){\ }="t" && z \ar@{=>}"s";"t"_{\lambda_{(j,\id)}a}
	}
	\equals
	\diagram{
		&& V_{12} \ar[dr]_{\ }="s2" \\
		v_{12}\times_x V_{12} \ar@/^/[urr]^{\pr_2}_{\ }="s1" 
			\ar@/_/[drr]_{\pr_2}^{\ }="t3" \ar[r]^(.85){\ }="t1"_(.85){\ }="s3" & 
			v_{12} \ar[ur] \ar[dr] && z\\
		&& V_{12} \ar[ur]^{\ }="t2"%
		\ar@{=>}"s1";"t1"
		\ar@{=>}"s2";"t2"_a
		\ar@{=>}"s3";"t3"
	}\;.
\]
\end{definition}

We now prove left whiskering by $(j,\id_u)$ preserves composition. In the following, let $\lambda^{II}(-) :=\lambda^{II}_{(j,\id)}(-)$
\begin{align*}
	\diagram@C-8pt{
		v_{123}\times_x V_{123} \ar[dd]\\
		\\
		v_{13}\times_x V_{13}\ar[dd]\\
		\\
		V_{13} \ar@/^1.7pc/[rr]_(.7){\ }="s" \ar@/_1.5pc/[rr]^(.7){\ }="t" &&z
		\ar@{=>}"s";"t"_(0.35){\tiny \lambda^{II}(a_1+a_2)\!}
	}
	& \equals 
	\diagram@C-8pt{
		v_{123}\times_x V_{123} \ar[dd]\\
		&& V_{13} \ar[dr]_{\ }="s2" &\\
		v_{13}\times_x \text{\footnotesize{$V_{13}$}} \ar[r]^(.8){\ }="t1"_(.8){\ }="s3" \ar@/^/[urr]_{\ }="s1" \ar@/_/[drr]^{\ }="t3" & 
			v_{13} \ar[ur] \ar[dr] && z\\
		&& \text{\large{$V_{13}$}} \ar[ur]^{\ }="t2"&\\
		\ar@{=>}"s2";"t2"_{a_1+a_2}
		\ar@{=>}"s1";"t1"
		\ar@{=>}"s3";"t3"
	}\\
	& \equals
	\diagram@C-8pt{
		&& \text{\footnotesize{$V_{123}$}} \ar@/_1.2pc/[ddd]!UL|(0.56)\hole|(0.7)\hole \\
		v_{123}\times_x V_{123} \ar@/^/[urr]_{\ }="s1" \ar@/_/[drr]^{\ }="t2" \ar[r]^(.8){\ }="t1"_(.8){\ }="s2" & v_{123} \ar[ddd]|(0.25)\hole \ar[ur]!DL \ar[dr]\\
		&&\text{\large{$V_{123}$}} \ar@/^1pc/[ddd]!UR &\\
		&& V_{13} \ar[dr]|(0.42)\hole &\\
		& v_{13}  \ar[ur]_(.8){\ }="s3" \ar[dr]^(.8){\ }="t3" && z\\
		&&\text{\large{$V_{13}$}} \ar[]!R;[ur]&
		\ar@{=>}"s3";"t3"_{a_1+a_2}
		\ar@{=>}"s1";"t1"
		\ar@{=>}"s2";"t2"
	}\\
	&\equals
	\diagram@C-8pt{
		&& V_{123} \ar[dr]_{\ }="s3"\\
		v_{123}\times_x V_{123} \ar@/^/[urr]_{\ }="s1" \ar@/_/[drr]^{\ }="t2" \ar[r]^(.8){\ }="t1"_(.8){\ }="s2" & v_{123}  \ar[ur]!DL \ar[dr]
		&& z\\
		&&V_{123}  \ar[ur]^{\ }="t3"&
		\ar@{=>}"s3";"t3"_{a_1\oplus a_2}
		\ar@{=>}"s1";"t1"
		\ar@{=>}"s2";"t2"
	}\\
	& \equals
	\diagram@C-8pt{
		&V_{123} \ar[rr] &&V_{12} \ar[ddrr]_{\ }="s3"\\
		&& v_{12} \ar[ur] \ar[dr] &  \\
		v_{123}\times_x V_{123} \ar[uur]_{\ }="s1" \ar[r]^(.6){\ }="t1"_(.6){\ }="s2" \ar[ddr]^{\ }="t2"
		& v_{123} \ar[uu] \ar[dd] \ar[dr] \ar[ur] && v_2 \ar[rr]^{\ }="t3"_{\ }="s4"&  & z\\
		&&v_{23} \ar[dr] \ar[ur] &  \\
		&V_{123} \ar[rr] && V_{23} \ar[uurr]^{\ }="t4"
		\ar@{=>}"s1";"t1"
		\ar@{=>}"s2";"t2"
		\ar@{=>}"s3";"t3"_{a_1}
		\ar@{=>}"s4";"t4"_{a_2}
	}\\
	& \equals
	\diagram@C-8pt{
		&V_{123} \ar[rr] &&V_{12} \ar[ddrr]_{\ }="s3" \\
		&& \pmb{v_{12}} \ar[ur] \ar[r] & \pmb{V_{12}} \ar[dr]^{\ }="t3" \\
		\pmb{v_{123}\times_x V_{123}} \ar[uur]_{\ }="s1" \ar[r]^(.6){\ }="t1"_(.6){\ }="s2" \ar[ddr]^{\ }="t2"
		& \pmb{v_{123}} \ar[uu] \ar[dd] \ar[dr] \ar[ur] &&& \pmb{v_2} \ar[r]& z\\
		&& \pmb{v_{23}} \ar[dr] \ar[r] & \pmb{V_{23}} \ar[ur]_{\ }="s4" \\
		&V_{123} \ar[rr] && V_{23} \ar[uurr]^{\ }="t4"
		\ar@{=>}"s1";"t1"
		\ar@{=>}"s2";"t2"
		\ar@{=>}"s3";"t3"_{a_1}
		\ar@{=>}"s4";"t4"_{a_2}
	}\\
	& \equals
	\diagram@C-8pt{
		&& &V_{12} \ar[dddrr]_{\ }="s5" \\
		& \pmb{v_{12}\times_x V_{12}} \ar@/^/[urr]_{\ }="s1" \ar[r]^(.8){\ }="t1"_(.8){\ }="s2" \ar@/_/[drr]^{\ }="t2" & \pmb{v_{12}} \ar[ur] \ar[dr]\\
		&&& \pmb{V_{12}} \ar[dr]^{\ }="t5"&\\
		\pmb{v_{123}\times_x V_{123}} \ar[uur] \ar[ddr] &&&& \pmb{v_2} \ar[r] & z \\
		&&& \pmb{V_{23}} \ar[ur]_{\ }="s6" &\\
		& \pmb{v_{23} \times_x V_{23}} \ar@/^/[urr]_{\ }="s3" \ar[r]^(.8){\ }="t3"_(.8){\ }="s4" \ar@/_/[drr]^{\ }="t4" & \pmb{v_{23}} \ar[ur] \ar[dr]\\
		&& &V_{23} \ar[uuurr]^{\ }="t6"
		\ar@{=>}"s1";"t1"
		\ar@{=>}"s2";"t2"
		\ar@{=>}"s3";"t3"
		\ar@{=>}"s4";"t4"
		\ar@{=>}"s5";"t5"_{a_1}
		\ar@{=>}"s6";"t6"_{a_2}
	}\\
	\intertext{(where the subdiagrams on the bold symbols are equal)}
	& \equals
	\diagram@C-8pt{
		& v_{12} \times_x V_{12} \ar[r] & V_{12} \ar[dr] \ar[r] & v_1 \ar[dr]_(.25){\ }="s1"\\
		v_{123} \times_x V_{123} \ar[ur] \ar[dr] && & v_2 \ar[r]^(.15){\ }="t1"_(.15){\ }="s2" & z \\
		& v_{23}\times_x V_{23} \ar[r] & V_{23} \ar[ur] \ar[r] & v_3 \ar[ur]^(.25){\ }="t2"
		\ar@{=>}"s1";"t1"_(.01){\tiny \lambda^{II} a_1\!}
		\ar@{=>}"s2";"t2"_(.99){\tiny \lambda^{II} a_2}
	}\\
	& \equals
	\diagram@C-8pt{
		v_{123}\times_x V_{123} \ar[d] & V_{12} \ar[r] \ar[dr] & v_1 \ar[dr]_(.25){\ }="s1" \\
		V_{123} \ar[ur] \ar[dr] && v_2 \ar[r]^(.15){\ }="t1"_(.15){\ }="s2" & z\\
		& V_{23} \ar[ur] \ar[r] & v_3 \ar[ur]^(.25){\ }="t2"
		\ar@{=>}"s1";"t1"_(.01){\tiny \lambda^{II} a_1\!}
		\ar@{=>}"s2";"t2"_(.99){\tiny \lambda^{II} a_2}
	}\\
	&\equals
	\diagram@C-8pt{
		v_{123}\times_x V_{123} \ar[d] \\
		V_{123} \ar[d]\\
		V_{13} \ar@/^1.5pc/[rrr]_(.65){\ }="s" \ar@/_1.5pc/[rrr]^(.65){\ }="t" &&&z
		\ar@{=>}"s";"t"_{\lambda^{II} a_1+\lambda^{II} a_2}
	}
\hspace{1000pt minus 2000pt}
\end{align*}

By uniqueness of descent, we have $\lambda^{II}(a_1+a_2) = \lambda^{II} a_1+\lambda^{II} a_2$.

Putting the two results this appendix together, arbitrary left whiskering preserves vertical composition.

\section{Proof that the associator is natural}\label{app:assoc_natural}

We will use \cite[Proposition~II.3.2]{CWM}, which says the naturality condition for a transformation between functors $A\times B \times C \to D$ can be checked in each component separately. 
Thus we only need to show the putative associator for $K_\J$ is natural for 2-arrows arising from double right whiskering, double left whiskering and left+right whiskering: 
\[
	\xymatrix{
		\ar@/^1pc/[r]_{\ }="s" \ar@/_1pc/[r]^{\ }="t" & \ar[r]& \ar[r]&
		\ar@{=>}"s";"t"
	}
\qquad
	\xymatrix{
		\ar[r]&  \ar[r] & \ar@/^1pc/[r]_{\ }="s" \ar@/_1pc/[r]^{\ }="t" &
		\ar@{=>}"s";"t"
	}
\qquad
	\xymatrix{
		\ar[r]& \ar@/^1pc/[r]_{\ }="s" \ar@/_1pc/[r]^{\ }="t" &  \ar[r]&
		\ar@{=>}"s";"t"
	}
\]
where these 1-arrows are $\J$-fractions, and the 2-arrows are maps of $\J$-fractions.

The major tool here is Lemma~\ref{lemma:renaming_iso_composite}, in particular the second and third cases, since the associators arise from invertible renaming transformations.

For the rest of this appendix, fix composable triples of $\J$-fractions:
\begin{align*}
	x_1 \xleftarrow{j_1} u_1 \xrightarrow{f_1} x_2\quad 
	x_2 \xleftarrow{j_2} u_2 \xrightarrow{f_2} x_3\quad 
	x_3 \xleftarrow{j_3} u_3 \xrightarrow{f_3} x_4
	\\
	x_1 \xleftarrow{k_1} v_1 \xrightarrow{g_1} x_2\quad 
	x_2 \xleftarrow{k_2} v_2 \xrightarrow{g_2} x_3\quad 
	x_3 \xleftarrow{k_3} v_3 \xrightarrow{g_3} x_4
\end{align*}
and maps of fractions $a_i\colon (j_i,f_i) \Rightarrow (k_i,g_i)$ for $i=1,2,3$.
Each step will only use one of $a_1,a_2$ or $a_3$ at a time.

We also need some notation for pulled back arrows, else there will be a confusing proliferation of projection maps.
So, given $f,g$ in the pullback square below (in our given 2-category $K$), the projection maps will be denoted $\widetilde{f}$ and $\widetilde{\jmath}$ as shown:
\[
	\xymatrix{
		x\times_y z \ar[r]^-{\widetilde{\jmath}} \ar[d]_{\widetilde{g}} & z \ar[d]^g \\
		x \ar[r]_j & y
	}
\]
If we need to pull an arrow $g$ back along two different maps $x_1 \xrightarrow{j_1} y$ and $x_2 \xrightarrow{j_2} y$, the results will be denoted $\widetilde{g}^{j_1}$ and $\widetilde{g}^{j_2}$ respectively.
To save space, horizontal composition will be denoted by juxtaposition (in function composition order), and the identity 2-arrow in $K$ on a 1-arrow $f$ will be denoted $1_f$.
 
\subsection{Double right whiskered}

See Definition~\ref{def:right whiskering} for how right whiskering is defined.  
We need to both right whisker $a_1$ twice in succession (by $(j_2,f_2)$ and $(j_3,f_3)$), and also whisker it by $(j_3,f_3)\circ (j_2,f_2) = (j_2\widetilde{j}_3,f_3\widetilde{f}_2)$.
The results then need to be vertically composed with associators in the appropriate order and compared.
Namely, we need to show the following 2-arrows are equal:
\[
	\rho_{(j_2\widetilde{\jmath_3},f_3\widetilde{f}_2)}a_1 + \iota(a_{v_1,u_2,u_3}) 
	\stackrel{?}{=} \iota(a_{u_1,u_2,u_3}) + \rho_{(j_3,f_3)}(\rho_{(j_2,f_2)}a_1).
\]
Or rather, we will show 
\[
	(\iota(a_{u_1,u_2,u_3}))^{-1} + \rho_{(j_2\widetilde{\jmath_3},f_3\widetilde{f}_2)}a_1 + \iota(a_{v_1,u_2,u_3}) \stackrel{?}{=} \rho_{(j_3,f_3)}(\rho_{(j_2,f_2)}a_1).
\]
Denote the 2-arrow data of $\rho_{(j_2\widetilde{\jmath_3},f_3\widetilde{f}_2)}a_1$ by $\rho_{f_3\tilde{f}_2}a$ and the 2-arrow data of $\rho_{(j_3,f_3)}(\rho_{(j_2,f_2)}a_1)$ by $\rho_{f_3}(\rho_{f_2}a)$.
Then $\rho_{f_3}(\rho_{f_2}a) := 1_{f_3}(1_{f_2}\widetilde{a}_1^{j_2})^\sim$, as shown the diagram in Figure~\ref{fig:double right whisker},
\begin{figure}
\[
\xymatrix@R3ex{
	& (u_1u_2)u_3 \ar[drr]^{(f_2\widetilde{f_1})^\sim}_{\ }="s1"\\
	[(u_1u_2)u_3]\times_{x_1} [(v_1u_2)u_3] \ar[ur] \ar[dr] \ar[ddd]_{q}
	&&& u_3 \ar[r]^{f_3} \ar[ddd]^{j_3} &x_4 \\
	&(v_1u_2)u_3 \ar[urr]_{(f_2\widetilde{g_1})^\sim}^{\ }="t1" &&\\
	& u_1u_2 \ar[dr]^{\widetilde{f_1}}_{\ }="s2"\\
	u_1u_2\times_{x_1} v_1u_2 \ar[ur] \ar[dr] \ar[ddd]_{p} && u_2 \ar[r]_{f_2} \ar[ddd]^{j_2} & x_3\\
	& v_1u_2 \ar[ur]_{\widetilde{g_1}}^{\ }="t2"\\
	& u_1 \ar[dr]^{f_1}_{\ }="s3"\\
	u_1\times_{x_1} v_1 \ar[ur] \ar[dr] && x_2\\
	& v_1 \ar[ur]_{g_1}^{\ }="t3"
	\ar@{=>}"s1";"t1"_{(1_{f_2}\widetilde{a}_1^{j_2})^\sim}
	\ar@{=>}"s2";"t2"_{\widetilde{a}_1^{j_2}}
	\ar@{=>}"s3";"t3"_{a_1}
}
\]
\caption{Constructing the double right whiskering}
\label{fig:double right whisker}
\end{figure}
where $(u_1u_2)u_3 := (u_1\times_{x_2} u_2)\times_{x_3} u_3$, $(v_1u_2)u_3:=(v_1\times_{x_2} u_2)\times_{x_3} u_3$ and similarly for $u_1u_2$ and $v_1u_2$.
The 2-arrows $\widetilde{a}_1^{j_2}$ and $(1_{f_2}\widetilde{a}_1^{j_2})^\sim$ are the unique 2-arrows satisfying
\begin{align}
1_{j_3}(1_{f_2}\widetilde{a}_1^{j_2})^\sim & = 1_{f_2}\widetilde{a}_1^{j_2}1_q \label{eq:left_assoc_eq_1}\\
1_{j_2}\widetilde{a}_1^{j_2} & = a_1 1_p \label{eq:left_assoc_eq_2}
\end{align}
respectively, using the fact $j_2$ and $j_3$ are ff.
We also have $\rho_{f_3\tilde{f}_2}a := 1_{f_3}1_{f_2}\widetilde{a}_1^{j_2\widetilde{\jmath_3}}$, as shown in the diagram in Figure~\ref{fig:right whisker composite}.
\begin{figure}
\[
\xymatrix@R3ex{
	& u_1(u_2u_3) \ar[dr]^{\widetilde{f_1}^{j_2\widetilde{\jmath_3}}}_{\ }="s1"\\
	[u_1(u_2u_3)]\times_{x_1} [v_1(u_2u_3)]\ar[ur] \ar[dr] \ar[dddd]_{\pi}&& u_2u_3 \ar[r]^{\widetilde{f_2}} \ar[dd]^{\widetilde{\jmath_3}} & u_3 \ar[r]^{f_3} \ar[dd]^{j_3} & x_4\\
	& v_1(u_2u_3) \ar[ur]_{\widetilde{g_1}^{j_2\widetilde{\jmath_3}}}^{\ }="t1" \\
	&& u_2\ar[r]_{f_2} \ar[dd]^{j_2}&x_3\\
	& u_1 \ar[dr]^{f_1}_{\ }="s2"\\
	u_1\times_{x_1} v_1 \ar[ur] \ar[dr] && x_2\\
	& v_1 \ar[ur]_{g_1}^{\ }="t2"
	\ar@{=>}"s1";"t1"_{\widetilde{a}_1^{j_2\widetilde{\jmath_3}}}
	\ar@{=>}"s2";"t2"_{a_1}
}
\]
\caption{Constructing the right whiskering by the composite}
\label{fig:right whisker composite}
\end{figure}
Again, using the fact $j_2\widetilde{\jmath_3}$ is ff, $\widetilde{a}_1^{j_2\widetilde{\jmath_3}}$ is the unique 2-arrow satisfying
\begin{equation}\label{eq:left_assoc_eq_3}
1_{j_2}1_{\widetilde{\jmath_3}}\widetilde{a}_1^{j_2\widetilde{\jmath_3}} = a_11_\pi
\end{equation}
Finally, we note that we have
\begin{equation}\label{eq:left_assoc_eq_4}
\pi \alpha = pq
\end{equation}
where $\alpha:= (a_{u_1,u_2,u_3}\times a_{v_1,u_2,u_3})^{-1}$.
Using Lemma~\ref{lemma:renaming_iso_composite}, we see that $(\iota(a_{u_1,u_2,u_3}))^{-1} + \rho_{(j_2\widetilde{\jmath_3},f_3\widetilde{f}_2)}a_1 + \iota(a_{v_1,u_2,u_3})$ is given by
\[
	\xymatrix{
		[(u_1u_2)u_3]\times_{x_1} [(v_1u_2)u_3] 
		\ar[r]^{\alpha} & [u_1(u_2u_3)]\times_{x_1} [v_1(u_2u_3)] \ar@/^1pc/!UR;[rr]_(.7){\ }="s" \ar@/_1pc/!DR;[rr]^(.7){\ }="t" && u_3 \ar[r]^{f_3\widetilde{f_2}} & x_4
		\ar@{=>}"s";"t"_{\widetilde{a}_1^{j_2\widetilde{\jmath_3}}}
	}
\]
We can thus calculate
\begin{align*}
1_{j_2}1_{\widetilde{\jmath_3}}\widetilde{a}_1^{j_2\widetilde{\jmath_3}} 1_\alpha &
= a_1 1_\pi 1_\alpha \qquad \text{using \eqref{eq:left_assoc_eq_3}}\\
& = a_1 1_p 1_q  \qquad \text{using \eqref{eq:left_assoc_eq_4}} \\
& = 1_{j_2}\widetilde{a}_1^{j_2} 1_q  \qquad \text{using \eqref{eq:left_assoc_eq_2}}
\end{align*}
and hence, since $j_2$ is ff, we have $1_{\widetilde{\jmath_3}}\widetilde{a}_1^{j_2\widetilde{\jmath_3}} 1_\alpha = \widetilde{a}_1^{j_2} 1_q$.
Continuing,
\begin{align*}
1_{j_3}1_{\widetilde{f_2}}\widetilde{a}_1^{j_2\widetilde{\jmath_3}} 1_\alpha
& = 1_{f_2}1_{\widetilde{\jmath_3}}\widetilde{a}_1^{j_2\widetilde{\jmath_3}} 1_\alpha \qquad \text{by definition of $\widetilde{f}_2$ and $\widetilde{\jmath_3}$}\\
& = 1_{j_3} \widetilde{a}_1^{j_2} 1_q\\
& = 1_{j_3}(1_{f_2}\widetilde{a}_1^{j_2})^\sim \qquad \text{using \eqref{eq:left_assoc_eq_1}},
\end{align*}
but now since $j_3$ is ff, we have $1_{\widetilde{f_2}}\widetilde{a}_1^{j_2\widetilde{\jmath_3}} 1_\alpha = (1_{f_2}\widetilde{a}_1^{j_2})\widetilde{\phantom{m}}$, and hence 
\begin{align*}
(\iota(a_{u_1,u_2,u_3}))^{-1} + \rho_{(j_2\widetilde{\jmath_3},f_3\widetilde{f}_2)}a_1 + \iota(a_{v_1,u_2,u_3}) & = 1_{f_3}1_{\widetilde{f_2}}\widetilde{a}_1^{j_2\widetilde{\jmath_3}} 1_\alpha\\
& = 1_{f_3} (1_{f_2}\widetilde{a}_1^{j_2})^\sim\\
& = \rho_{(j_3,f_3)}(\rho_{(j_2,f_2)}a_1),
\end{align*}
which is what we needed to show.

\subsection{Double left whiskered}

See Definitions~\ref{def:left whiskerin case I} and \ref{def:left whiskering case II} for how left whiskering is defined.  
We need to both left whisker $a_3$ twice in succession (by $(j_2,f_2)$ and $(j_1,f_1)$), and also whisker it by $(j_2,f_2)\circ (j_1,f_1)$.
The results then need to be vertically composed with associators in the appropriate order and compared.
Namely, we need to show the following 2-arrows are equal:
\[
	\lambda_{(j_1,f_1)}(\lambda_{(j_2,f_2)}a_3) + \iota(a_{u_1,u_2,v_3}) 
\stackrel{?}{=} 
	\iota(a_{u_1,u_2,u_3}) + \lambda_{(j_1\widetilde{\jmath_2},f_2\widetilde{f}_1)}a_3
\]
What we will show is the following
\[
	 \lambda_{(j_1,f_1)}(\lambda_{(j_2,f_2)}a_3) \stackrel{?}{=} 
	 \iota(a_{u_1,u_2,u_3}) + \lambda_{(j_1\widetilde{\jmath_2},f_2\widetilde{f}_1)}a_3 + \iota(a_{u_1,u_2,v_3})^{-1}.
\]

We can give a direct construction of the left whiskering, combining the two cases given in Definitions~\ref{def:left whiskerin case I} and \ref{def:left whiskering case II}, as follows.\footnote{This construction was found during revision of the article, after referee comments. 
I thank Fosco Loregian for the whimsical suggestion to call it the ``teapot lemma''.}

\begin{lemma}\label{lemma:left_whisker_combined}
Let $y\xleftarrow {j}v_1 \xrightarrow{f} z$ and $y\xleftarrow {k}v_2 \xrightarrow{g} z$ be $\J$-fractions. The left whiskering of the map $a\colon (j,f) \Rightarrow (k,g)$ by $x\xleftarrow{l} u\xrightarrow{h} y$ is given by the 2-arrow $\lambda a$ that is the unique descent of $a\circ 1_{\widetilde{h}^f\times\widetilde{h}^g}$ along the co-ff arrow $i$ in the following diagram
\[
	\xymatrix{
	&& u\times_y v_1 \ar[dr]^{f\widetilde{h}^f}_{\ }="s2" 
	\ar@/_1.5pc/[]!DL;[ddd]!UL |(0.54)\hole_(0.75){\pr_2}
	\\
	(u\times_y v_1)\times_u (u\times_y v_2) \ar[r]^-{i} \ar@/_1pc/[dddr]^{\widetilde{h}^f\times\widetilde{h}^g} & 
	(u\times_y v_1)\times_x (u\times_y v_2)\ar[];[ur]!L^{\pr_{12}} \ar[dr]_{\pr_{34}} && z \ar@{=}[ddd]\\
	&& u\times_y v_2 \ar[ur]_{g\widetilde{h}^g}^{\ }="t2" \ar@/^1.5pc/[ddd]_(0.75){\pr_2}&\\
	&& v_1\ar[dr]^f|(0.35)\hole&\\
	&v_1\times_y v_2 \ar[ur]_{\ }="s1"^{\pr_1} \ar[dr]^{\ }="t1"_{\pr_2} && z\\
	&& v_2\ar[]!R;[ur]_g
	\ar@{=>}"s1";"t1"^a
	\ar@{=>}"s2";"t2"_{\lambda a}
	}
\]
\end{lemma}
\begin{proof}
We first need to verify that $i$ is indeed co-ff. 
But this follows from the fact it fits into a commutative square
\[
	\xymatrix{
		(u\times_y v_1)\times_u (u\times_y v_2) \ar[r]^-{i} \ar[d] & 
		(u\times_y v_1)\times_x (u\times_y v_2) \ar[d]\\
		u \ar[r]_l & x
	}
\]
where the left and right vertical arrows are in $\J$ (and $l\in\J$ by assumption), hence $i$ is a map between $\J$-covers, and we can apply Lemma~\ref{lemma:stronger_co-ff}.

We thus need to verify that the left whiskerings (in $K$) of $\lambda_{(l,h)} a$ and $\lambda a$ with $i$ are equal, since then $i$ being co-ff means $\lambda_{(l,h)} a = \lambda a$.
But this follows from the definition of the left whiskering of $\lambda_{(\id_u,h)}a$ with $(l,\id_u)$.
\end{proof}

We first construct the 2-arrow $\lambda_{f_1}(\lambda_{f_2}a_3)$ specifying $\lambda_{(j_1,f_1)}(\lambda_{(j_2,f_2)}a_3)$, using the diagram in Figure~\ref{fig:double left whisker}.
\begin{figure}
\[
	\noindent\makebox[\textwidth]{%
	\xymatrix{
		[u_1(u_2u_3)]\times_{u_1\times_{x_1}u_2} [u_1(u_2v_3)]  \ar[rr]^-{P} \ar[dd]_{Q} \ar[rd] &
		& (u_2u_3)\times_{u_2}(u_2v_3) \ar[d]_{\widetilde{\jmath_3}} \ar[r]_-{q} \ar[rdd]_(.7){i_q}& u_3v_3 \ar[d]^{j_3} 
		\ar@/^1pc/[rr]_{\ }="s1"^{f_3\pr_1} \ar@/_1pc/[rr]^{\ }="t1"_{g_3\pr_2} && x_4 
		\\
		& 
		u_1\times_{x_2} u_2 \ar[r]_{\widetilde{f_1}} \ar[dd]|!{[dl];[drr]}\hole _(.3){\widetilde{\jmath_2}} & u_2 \ar[r]|!{[u];[dr]}\hole _(.3){f_2} \ar[dd]|!{[dll];[dr]}\hole _(.7){j_2} & x_3 
		\\
		[u_1(u_2u_3)]\times_{u_1} [u_1(u_2v_3)] \ar[dd]_{i_p}\ar[dr] \ar[rrr]^-{p} &
		&& (u_2u_3)\times_{x_2} (u_2v_3) \ar[dl] 
		\ar@/^1pc/!UR;[rr]_(.15){\ }="s2"^{f_3\widetilde{f_2}\pr_{12}} \ar@/_1pc/!DR;[rr]^(.15){\ }="t2"_{g_3\widetilde{f_2}\pr_{34}} 
		&& x_4
		\\
		&
		u_1 \ar[d]_{j_1} \ar[r] & x_2 
		\\
		[u_1(u_2u_3)]\times_{x_1} [u_1(u_2v_3)] \ar[r] &
		 x_1
		\ar@{=>}"s1";"t1"^{a_3}
		\ar@{=>}"s2";"t2"^{\lambda_{f_2}a_3}
	}}
\]
\caption{Constructing the double left whiskering}
\label{fig:double left whisker}
\end{figure}
Where $\lambda_{f_2}a_3$ is the unique 2-arrow satisfying 
\begin{equation}\label{eq:dbl_left_assoc_nat_1}
(\lambda_{f_2}a_3) 1_{i_q} = a_3 1_q.
\end{equation}
The last step is that $\lambda_{f_1}(\lambda_{f_2}a_3)$ is the unique 2-arrow satisfying 
\begin{equation}\label{eq:dbl_left_assoc_nat_2}
(\lambda_{f_1}(\lambda_{f_2}a_3)) 1_{i_p} = (\lambda_{f_2}a_3) 1_p.
\end{equation}
Note that from the diagram we also have
\begin{equation}\label{eq:dbl_left_assoc_nat_4}
pQ = i_qP.
\end{equation}

We next construct the 2-arrow $\lambda_{f_2\widetilde{f}_1}a_3$ specifying $\lambda_{(j_1\widetilde{\jmath_2},f_2\widetilde{f}_1)}a_3$, using the diagram in Figure~\ref{fig:one left whiskering}.
\begin{figure}
\[
\noindent\makebox[\textwidth]{%
\xymatrix{
	[u_1(u_2u_3)]\times_{x_1} [u_1(u_2v_3)] \ar[r]^-{\alpha} &[(u_1u_2)u_3]\times_{x_1} [(u_1u_2)v_3] 
	\ar@/^1pc/!UR;[rr]_(.2){\ }="s2"^{f_3(f_2\widetilde{f_1})^\sim\pr_1} \ar@/_1pc/!DR;[rr]^(.2){\ }="t2"_{g_3(f_2\widetilde{f_1})^\sim\pr_2} && x_4
	\\
	[u_1(u_2u_3)]\times_{u_1} [u_1(u_2v_3)] \ar[u]^{i_p}\\
	[u_1(u_2u_3)]\times_{u_1\times_{x_1}u_2} [u_1(u_2v_3)] \ar[r]_-{\widetilde{\alpha}} \ar[u]^{Q} & [(u_1u_2)u_3]\times_{u_1\times_{x_1}u_2} [(u_1u_2)v_3] \ar[r]_-{\pi} \ar[d] \ar[uu]^{i_\pi} & u_3v_3 \ar[d] \ar@/^1pc/[rr]_{\ }="s1"^{f_3\pr_1} \ar@/_1pc/[rr]^{\ }="t1"_{g_3\pr_2}&& x_4
	\\
	&u_1u_2 \ar[d] \ar[r]_{\widetilde{f_2}f_1} & x_3
	\\
	&x_1
	\ar@{=>}"s1";"t1"^{a_3}
	\ar@{=>}"s2";"t2"^{\lambda_{f_2\widetilde{f_1}}a_3}
}}
\]
\caption{Constructing the left whisking by the composite}
\label{fig:one left whiskering}
\end{figure}
Here $\lambda_{f_2\widetilde{f}_1}a_3$ is the unique 2-arrow satisfying 
\begin{equation}\label{eq:dbl_left_assoc_nat_3}
(\lambda_{f_2\widetilde{f}_1}a_3) 1_{i_\pi} = a_3 1_\pi,
\end{equation}
and we note the identities
\begin{align}
i_\pi\widetilde{\alpha} & = \alpha i_p Q \label{eq:dbl_left_assoc_nat_5}\\
qP & = \pi \widetilde{\alpha}\label{eq:dbl_left_assoc_nat_6}
\end{align}
Further, we have that the map of $\J$-fractions
\[
	\iota(a_{u_1,u_2,u_3})^{-1} + \lambda_{(j_1\widetilde{\jmath_2},f_2\widetilde{f}_1)}a_3 + \iota(a_{u_1,u_2,v_3}) 
\]
is specified by the 2-arrow $(\lambda_{f_2\widetilde{f}_1}a_3) 1_\alpha$.
Hence we can make the following calculation
\begin{align*}
(\lambda_{f_2\widetilde{f}_1}a_3) 1_\alpha 1_{i_pQ} &  = (\lambda_{f_2\widetilde{f}_1}a_3) 1_{i_\pi} 1_{\widetilde{\alpha}}\qquad \text{using \eqref{eq:dbl_left_assoc_nat_5} }\\
& = a_3 1_\pi 1_{\widetilde{\alpha}}\qquad \text{using \eqref{eq:dbl_left_assoc_nat_3}}\\
& = a_3 1_q 1_P \qquad \text{using \eqref{eq:dbl_left_assoc_nat_6} } \\
& = (\lambda_{f_2}a_3) 1_{i_q} 1_P \qquad \text{using \eqref{eq:dbl_left_assoc_nat_1} }\\
& = (\lambda_{f_2}a_3) 1_p 1_Q \qquad \text{using \eqref{eq:dbl_left_assoc_nat_4} }\\
& = (\lambda_{f_1}(\lambda_{f_2}a_3)) 1_{i_p} 1_Q \qquad \text{using \eqref{eq:dbl_left_assoc_nat_2} }
\end{align*}
Notice that $Q$ is co-ff as $\widetilde{\jmath_2}$ is a $\J$-cover, as well as the other two maps in the square defining $Q$, and we can apply Lemma~\ref{lemma:stronger_co-ff}. 
Since $i_p$ is also co-ff (by the proof of Lemma~\ref{lemma:left_whisker_combined}), and co-ff 1-arrows are closed under composition, we have $(\lambda_{f_1}(\lambda_{f_2}a_3)) = \lambda_{f_2\widetilde{f}_1}a_3 1_\alpha$, as we needed to show.

\subsection{Left+right whiskered}

We need to both whisker $a_2$ by $(j_1,f_1)$ and $(j_3,f_3)$, in that order, and also whisker it by $(j_3,f_3)$ and $(j_1,f_1)$, in that order.
The results then need to be vertically composed with associators in the appropriate order and compared.
Namely, we need to show the following 2-arrows are equal:
\[
	\lambda_{(j_1,f_1)}(\rho_{(j_3,f_3)}a_2) + \iota(a_{u_1,v_2,u_3})
	\stackrel{?}{=} 
	\iota(a_{u_1,u_2,u_3}) + \rho_{(j_3,f_3)}(\lambda_{(j_1,f_1)} a_3).
\]
What we will show, though, is the following:
\[
	 \rho_{(j_3,f_3)}(\lambda_{(j_1,f_1)} a_3)
	  \stackrel{?}{=} 
	 \iota(a_{u_1,u_2,u_3})^{-1} + \lambda_{(j_1,f_1)}(\rho_{(j_3,f_3)}a_2) + \iota(a_{u_1,v_2,u_3}),
\]
where $\iota(a_{u_1,u_2,u_3})^{-1} + \lambda_{(j_1,f_1)}(\rho_{(j_3,f_3)}a_2) + \iota(a_{u_1,v_2,u_3})$ is specified by the 2-arrow
\[
	\xymatrix{
		[(u_1u_2)u_3]\times_{x_1} [(u_1v_2)u_3] 
		\ar[r]^{\alpha} & [u_1(u_2u_3)]\times_{x_1} [v_1(u_2u_3)] \ar@/^1pc/!UR;[rr]_(.7){\ }="s" \ar@/_1pc/!DR;[rr]^(.7){\ }="t" && x_4 
		\ar@{=>}"s";"t"_{\lambda_{f_1}(\rho_{f_3}a_2)}
	}
\]
where $\lambda_{f_1}(\rho_{f_3}a_2)$ specifies the map $\lambda_{(j_1,f_1)}(\rho_{(j_3,f_3)}a_2)$ of $\J$-fractions.

We first construct the 2-arrow data of $\lambda_{(j_1,f_1)}(\rho_{(j_3,f_3)}a_2)$, using the diagram in Figure~\ref{fig:R+L whisker}.
\begin{figure}
\[
\xymatrix{
	&& u_1(u_2u_3) \ar[dr]_{\ }="s1"^{\widetilde{f}_2 \widetilde{f}_1^{\widetilde{f}_2}}\\
	u_1(u_2u_3) \times_{x_1} u_1(v_2u_3) \ar[urr] \ar[drr]&&& u_3 \ar@{=}[ddd] & \\
	&&u_1(v_2u_3)  \ar[ur]^{\ }="t1"_{\widetilde{g}_2 \widetilde{f}_1^{\widetilde{g}_2}}\\
	&& u_2u_3 \ar[dr]^{\widetilde{f}_2}_{\ }="s2"\\
	u_1(u_2u_3) \times_{u_1} u_1(v_2u_3) \ar[r]_-{p} \ar[uuu]^{i_p} \ar[dddd]& u_2u_3 \times_{x_2} v_2u_3 \ar[ur] \ar[dr]\ar[ddd]_{\widetilde{\jmath_3}^2} && u_3 \ar[r]_{f_3} \ar[ddd]^{j_3}& x_4\\
	&& v_2u_3 \ar[ur]_{\widetilde{g}_3}^{\ }="t2"\\
	&& u_2 \ar[dr]^{f_2}_{\ }="s3"\\
	& u_2\times_{x_2}v_2 \ar[ur] \ar[dr] \ar[d]&& x_3\\
	u_1 \ar[r]_{f_1} \ar[d] &x_2& v_2 \ar[ur]_{g_2}^{\ }="t3"\\
	 x_1
	\ar@{=>}"s1";"t1"_{\lambda_{f_1}(\widetilde{a}_2)}
	\ar@{=>}"s2";"t2"_{\widetilde{a}_2}
	\ar@{=>}"s3";"t3"_{a_2}
}
\]
\caption{Constructing the right then left whiskering}\label{fig:R+L whisker}
\end{figure}
We have $\rho_{f_3}a_2 := 1_{f_3}\widetilde{a}_2$ where $\widetilde{a}_2$ is the unique 2-arrow satisfying 
\begin{equation}\label{eq:LR_assoc_nat_2}
1_{j_3} \widetilde{a}_2 = a_2 1_{\widetilde{\jmath_3}^2}.
\end{equation}
Similarly $\lambda_{f_1}(\rho_{f_3}a_2) :=1_{f_3}\lambda_{f_1}(\widetilde{a}_2)$, where $\lambda_{f_1}(\widetilde{a}_2)$ is the unique 2-arrow satisfying 
\begin{equation}\label{eq:LR_assoc_nat_1}
\lambda_{f_1}(\widetilde{a}_2)1_{i_p} = \widetilde{a}_2 1_p.
\end{equation}

We next construct the 2-arrow data of $\rho_{(j_3,f_3)}(\lambda_{(j_1,f_1)} a_3)$, using the diagram in Figure~\ref{fig:L+R whisker}.
\begin{figure}
\[
\xymatrix{
	&& (u_1u_2)u_3 \ar[dr]_{\ }="s1"\\
	(u_1u_2)u_3\times_{x_1} (u_1v_2)u_3 \ar[urr] \ar[drr]  \ar[ddd]_{\widetilde{\jmath_3}^{12}} &&& u_3 \ar[r]^{f_3} \ar[ddd]^{j_3}& x_4\\ 
	&& (u_1v_2)u_3 \ar[ur]^{\ }="t1"\\
	&& u_1u_2 \ar[dr]_{\ }="s2"^{f_2\widetilde{f}_1} \\
	u_1u_2\times_{x_1} u_1v_2 \ar[urr] \ar[drr] &&& x_3\ar@{=}[ddd]\\
	&&u_1v_2 \ar[ur]^{\ }="t2"_{g_2\widetilde{f}_1}&&\\
	&& u_2 \ar[dr]^{f_2}_{\ }="s3"\\
	u_1u_2\times_{u_1}u_1v_2 \ar[r]_-{q}  \ar[uuu]^{i_q} \ar[d]& u_2v_2 \ar[ur] \ar[dr] \ar[d] && x_3\\
	u_1 \ar[r]_{f_1} \ar[d]_{j_1} & x_2 & v_2 \ar[ur]_{g_2}^{\ }="t3"\\
	x_1
	\ar@{=>}"s1";"t1"_{(\lambda_{f_1}a_2)^\sim}
	\ar@{=>}"s2";"t2"_{\lambda_{f_1}a_2}
	\ar@{=>}"s3";"t3"_{a_2}
}
\]
\caption{Constructing the left then right whiskering}
\label{fig:L+R whisker}
\end{figure}
We have $\rho_{f_3}(\lambda_{f_1}a_2) := 1_{f_3}(\lambda_{f_1}a_2)^\sim$, where $(\lambda_{f_1}a_2)^\sim$ is the unique 2-arrow satisfying
\begin{equation}\label{eq:LR_assoc_nat_4}
1_{j_3}(\lambda_{f_1}a_2)^\sim = (\lambda_{f_1}a_2)1_{\widetilde{\jmath_3}^{12}}
\end{equation}
Similarly, $\lambda_{f_1}a_2$ is the unique 2-arrow satisfying
\begin{equation}\label{eq:LR_assoc_nat_3}
(\lambda_{f_1}a_2)1_{i_q} = a_2 1_q.
\end{equation}
Finally, we note that we have commutative squares
\begin{equation}\label{eq:LR_assoc_nat_7}
\xymatrix{
	u_1u_2\times_{u_1}u_1v_2 \ar[d]_{i_q}&\ar[l]_-{J} (u_1u_2)u_3\times_{u_1} (u_1v_2)u_3 \ar[r]^{\widetilde{\alpha}}_{\simeq} \ar[d]_\pi &u_1(u_2u_3)\times_{u_1} u_1(v_2u_3) \ar[d]^{i_p} 
	\\
	u_1u_2\times_{x_1}u_1v_2 &\ar[l]^-{\widetilde{\jmath_3}^{12}} (u_1u_2)u_3\times_{x_1} (u_1v_2)u_3 \ar[r]_{\alpha}^{\simeq} &
	u_1(u_2u_3)\times_{x_1} u_1(v_2u_3)
}
\end{equation}
and we need to show that
\begin{equation}\label{eq:LR_whisker_final}
	\lambda_{f_1}(\rho_{f_3}a_2) 1_\alpha = 1_{f_3} (\lambda_{f_1}\widetilde{a}_2)1_\alpha = 1_{f_3}( \lambda_{f_1}\widetilde{a}_2)1_\alpha \stackrel{?}{=} 1_{f_3}(\lambda_{f_1}a_2)^\sim = \rho_{f_3}(\lambda_{f_1} a_2).
\end{equation}
We can now calculate:
\begin{align*}
1_{j_3}(\lambda_{f_1}\widetilde{a}_2)1_{i_p} & = 1_{j_3} \widetilde{a}_2 1_p \qquad \text{using \eqref{eq:LR_assoc_nat_1}}\\
& = a_2 1_{\widetilde{\jmath_3}^2} 1_p \qquad \text{using \eqref{eq:LR_assoc_nat_2}}\\
& = a_2 1_q 1_J 1_{\widetilde{\alpha}^{-1}}\\
& =(\lambda_{f_1}a_2)1_{i_q}1_J 1_{\widetilde{\alpha}^{-1}} \qquad \text{using \eqref{eq:LR_assoc_nat_3}} \\
& =(\lambda_{f_1}a_2)1_{\widetilde{\jmath_3}^{12}} 1_\pi 1_{\widetilde{\alpha}^{-1}}  \qquad \text{using\eqref{eq:LR_assoc_nat_7}}\\
& = 1_{j_3} (\lambda_{f_1}a_2)^\sim 1_\pi 1_{\widetilde{\alpha}^{-1}}\qquad \text{using \eqref{eq:LR_assoc_nat_4}}\\
& = 1_{j_3} (\lambda_{f_1}a_2)^\sim 1_{\alpha^{-1}} 1_{i_p} \qquad \text{using\eqref{eq:LR_assoc_nat_7} }.
\end{align*}
Since $j_3$ is ff and $i_p$ is co-ff, we have $(\lambda_{f_1}(\widetilde{a}_2))1_\alpha = (\lambda_{f_1}a_2)^\sim$, from which \eqref{eq:LR_whisker_final} follows.

\medskip

\noindent
This completes the proof of Lemma~\ref{lemma:associator_is_natural}.


\section{Proof that the middle-four interchange holds}\label{app:middle_four}

We need to show the following equality of vertical compositions in $K_\J$:
\[
	\diagram{
		\ar@/^1pc/[r]_{\ }="s1" \ar@/_1pc/[r]^{\ }="t1" & \ar[r]& 
		\ar@{=>}"s1";"t1"\\
		\ar[r]& \ar@/^1pc/[r]_{\ }="s2" \ar@/_1pc/[r]^{\ }="t2" &  
		\ar@{=>}"s2";"t2"
	}
	\equals
	\diagram{
		\ar[r]& \ar@/^1pc/[r]_{\ }="s2" \ar@/_1pc/[r]^{\ }="t2" &  
		\ar@{=>}"s2";"t2"\\
		\ar@/^1pc/[r]_{\ }="s1" \ar@/_1pc/[r]^{\ }="t1" & \ar[r]& 
		\ar@{=>}"s1";"t1"
	}
\]
(Here the 1-arrows are $\J$-fractions, and the 2-arrows are maps of $\J$-fractions.)
For the rest of this appendix, fix composable pairs of $\J$-fractions
\begin{align*}
	x_1 \xleftarrow{j_1} u_1 \xrightarrow{f_1} x_2\quad 
	x_2 \xleftarrow{j_2} u_2 \xrightarrow{f_2} x_3
	\\
	x_1 \xleftarrow{k_1} v_1 \xrightarrow{g_1} x_2\quad 
	x_2 \xleftarrow{k_2} v_2 \xrightarrow{g_2} x_3
\end{align*}
and maps of fractions $a_i\colon (j_i,f_i) \Rightarrow (k_i,g_i)$ for $i=1,2$. 
In everything that follows, unlabelled 1-arrows are canonical projection maps.

We first calculate  $\rho_{(j_2,f_2)}a_1 + \lambda_{(g_1,k_1)}a_2$.
Define the arrows $p\colon v_1u_2\times_{x_1} v_1v_2\to u_2v_2$, $i_p\colon v_1u_2\times_{v_1} v_1v_2 \to v_1u_2\times_{x_1} v_1v_2$, and $\widetilde{\jmath}_2 = \pr_{13}\colon$, and for $a,b\in \{u,v\}$, the notation $a_1b_2 := a_1\times_{x_2} b_2$.
The arrow $i_p$ is, by  Lemma~\ref{lemma:stronger_co-ff}, co-ff.
Using the definition of right whiskering (Definition~\ref{def:right whiskering}), and left whiskering as in Lemma~\ref{lemma:left_whisker_combined}, we get that the 2-cell representing $\rho_{(j_2,f_2)}a_1 + \lambda_{(g_1,k_1)}a_2$ is the unique descent along the co-ff arrow $u_1u_2\times_{x_1} (v_1u_2\times_{x_1} v_1v_2) \to u_1u_2\times_{x_1} v_1v_2$ (using Lemma~\ref{lemma:stronger_co-ff}) of
\begin{equation}\label{eq:middle_four_1}
	\diagram{
	& u_1u_2\times_{x_1} v_1u_2 \ar[dr]^(.9){\ }="t1" \ar[r]& u_1u_2 \ar[dr]_(.3){\ }="s1"^{\widetilde{f}_1^{j_2}} \\
	u_1u_2\times_{x_1} (v_1u_2\times_{x_1} v_1v_2) \ar[ur] \ar[dr] && v_1u_2 \ar[r]_{\ }="s2"^{\widetilde{g}_1^{j_2}} & u_2 \ar[r]^{f_2} & x_3\\
	&v_1u_2\times_{x_1} v_1v_2 \ar[r] \ar[ur] & v_1v_2 \ar[r]^{\ }="t2"_{\widetilde{g}_1^{k_2}} & v_2 \ar[ur]_{g_2}
	\ar@{=>}"s1";"t1"_{\widetilde{a}_1^{j_2}}
	\ar@{=>}"s2";"t2"_{\lambda_{g_1}a_2}
	}
\end{equation}
where $\widetilde{a}_1^{j_2}$ is the unique lift through $j_2$ of $a_1 1_{\widetilde{\jmath_2}}$ (using $j_2$ is ff), and $\lambda_{g_1}a_2$ is the unique descent of $a_1 1_p$ along the co-ff arrow $i_p$.

We next calculate $\lambda_{(j_1,f_1)}a_2 + \rho_{(k_2,g_2)}a_1$.
Define the arrows $q\colon u_1u_2\times_{u_1} u_1 v_2 \to u_2v_2$, $i_q\colon u_1u_2\times_{u_1} u_1 v_2\to u_1u_2\times_{x_1} u_1 v_2$, and $\widetilde{k}_2= \pr_{13}\colon u_1v_2\times_{x_1} v_1v_2 \to u_1v_1$.
The arrow $i_q$ is, by  Lemma~\ref{lemma:stronger_co-ff}, co-ff.
Then the 2-arrow representing $\lambda_{(j_1,f_1)}a_2 + \rho_{(k_2,g_2)}a_1$ is the unique descent along the co-ff arrow $u_1u_2\times_{x_1} (u_1v_2\times_{x_1} v_1v_2) \to u_1u_2\times_{x_1} v_1v_2$ (using Lemma~\ref{lemma:stronger_co-ff}) of
\begin{equation}\label{eq:middle_four_2}
\diagram{
	&u_1u_2\times_{x_1}u_1v_2 \ar[r] \ar[dr] &u_1u_2 \ar[r]_{\ }="s1"^{\widetilde{f}_1^{j_2}}  & u_2 \ar[dr]^{f_2}\\
	u_1u_2\times_{x_1} (u_1v_2\times_{x_1} v_1v_2) \ar[ur] \ar[dr] &&u_1v_2 \ar[r]^{\ }="t1"_{\widetilde{f}_1^{k_2}} &v_2 \ar[r]_{g_2} &x_3\\
	&u_1v_2\times_{x_1} v_1v_2 \ar[r] \ar[ur]_(.9){\ }="s2" & v_1v_2 \ar[ur]_{\widetilde{g}_1^{k_1}}^(.3){\ }="t2"
	\ar@{=>}"s1";"t1"_{\lambda_{f_1}a_2}
	\ar@{=>}"s2";"t2"_{\widetilde{a}_1^{k_2}}
}
\end{equation}
where $\widetilde{a}_1^{k_2}$ is the unique lift through $k_2$ of $a_1 1_{\widetilde{k_2}}$ (using $k_2$ is ff), and $\lambda_{f_1}a_2$ is the unique descent of $a_1 1_q$ along the co-ff arrow $i_q$.

From here, we will left whisker (in $K$) both of $\rho_{(j_2,f_2)}a_1 + \lambda_{(g_1,k_1)}a_2$ and $\lambda_{(j_1,f_1)}a_2 + \rho_{(k_2,g_2)}a_1$ by the co-ff 1-arrow
\[
	\pi\colon (u_1u_2)v_2\times_{x_1} (v_1v_2)u_2 \to u_1u_2\times_{x_1}v_1v_2
\]
(again using Lemma~\ref{lemma:stronger_co-ff}) and show the resulting 2-arrows are equal.
Here $(u_1u_2)v_2 = (u_1\times_{x_2})\times_{x_2} v_2$ and similarly for $(v_1v_2)u_2$.
From this it will follow that $\rho_{(j_2,f_2)}a_1 + \lambda_{(g_1,k_1)}a_2 =  \lambda_{(j_1,f_1)}a_2 + \rho_{(k_2,g_2)}a_1$, and hence that middle-four interchange holds.

Note first that using \eqref{eq:middle_four_1}, the 2-arrow $\left(\rho_{(j_2,f_2)}a_1 + \lambda_{(g_1,k_1)}a_2\right)1_\pi$ is equal to
\begin{align*}
&
\diagram@C-8pt{
	&& u_1u_2\times_{x_1} v_1u_2 \ar[dr]^(.9){\ }="t1" \ar[r]& u_1u_2 \ar[dr]_(.3){\ }="s1"^{\widetilde{f}_1^{j_2}} \\
	(u_1u_2)v_2\times_{x_1} (v_1v_2)u_2\ar[r] \ar[d] &u_1u_2\times_{x_1} (v_1u_2\times_{x_1} v_1v_2) \ar[ur] \ar[dr] && v_1u_2 \ar[r]_{\ }="s2"^{\widetilde{g}_1^{j_2}} & u_2 \ar[r]^{f_2} & x_3\\
	v_1u_2\times_{v_1} v_1v_2 \ar[rr]_{i_p} &&v_1u_2\times_{x_1} v_1v_2 \ar[r] \ar[ur] & v_1v_2 \ar[r]^{\ }="t2"_{\widetilde{g}_1^{k_2}} & v_2 \ar[ur]_{g_2}
	\ar@{=>}"s1";"t1"_{\widetilde{a}_1^{j_2}}
	\ar@{=>}"s2";"t2"_{\lambda_{g_1}a_2}
}\\
& \equals \diagram@C-8pt{
	& u_1u_2\times_{x_1} v_1u_2 \ar[r] \ar[dr]^(.9){\ }="t1" & u_1u_2 \ar[dr]_(.3){\ }="s1"^{\widetilde{f}_1^{j_2}} \\
	(u_1u_2)v_2\times_{x_1} (v_1v_2)u_2 \ar[ur] \ar[dr] && v_1u_2 \ar[r]_{\widetilde{g}_1^{j_2}} & u_2 \ar[r]^{f_2}_(.2){\ }="s2" & x_3\\
	&v_1u_2\times_{v_1} v_1v_2\ar[r]_-{p} & u_2v_2 \ar[ur] \ar[r] & v_2 \ar[ur]_{g_2}^(.3){\ }="t2"
	\ar@{=>}"s1";"t1"_{\widetilde{a}_1^{j_2}}
	\ar@{=>}"s2";"t2"_{a_2}
}\\
&\equals \diagram@C-8pt{
	(u_1u_2)v_2\times_{x_1} (v_1v_2)u_2 \ar@/^1pc/[]!R;[drr]_{\ }="s1" 
	\ar[dr]^(.8){\ }="t1" && 
	& u_2 \ar[r]^{f_2}_(.2){\ }="s2" & x_3\\
	&v_1u_2\times_{x_1} v_1v_2\ar[r]_-{p} & u_2v_2 \ar[ur] \ar[r] & v_2 \ar[ur]_{g_2}^(.3){\ }="t2"
	\ar@{=>}"s1";"t1"_{\widetilde{a}_1}
	\ar@{=>}"s2";"t2"_{a_2}
}
\end{align*}
where the last equality holds as $u_2v_2\to u_2$ is ff, so we can lift $\widetilde{a}_1^{j_2}$ to $\widetilde{a}_1$.
We note that $\widetilde{a}_1$ is a lift of $a_1 1_Q$ through $u_2v_2\to x_2$, for $Q\colon (u_1u_2)v_2\times_{x_1} (v_1v_2)u_2 \to u_1v_1$. 

Now using \eqref{eq:middle_four_2}, the 2-arrow $\left(\lambda_{(j_1,f_1)}a_2 + \rho_{(k_2,g_2)}a_1\right)1_\pi$ is equal to
\begin{align*}
&
\diagram@C-8pt{
	u_1u_2\times_{u_1} u_1 v_2\ar[rr]^{i_q}&&u_1u_2\times_{x_1}u_1v_2 \ar[r] \ar[dr] &u_1u_2 \ar[r]_{\ }="s1"^{\widetilde{f}_1^{j_2}}  & u_2 \ar[dr]^{f_2}\\
	(u_1u_2)v_2\times_{x_1} (v_1v_2)u_2\ar[r] \ar[u]&u_1u_2\times_{x_1} (u_1v_2\times_{x_1} v_1v_2) \ar[ur] \ar[dr] &&u_1v_2 \ar[r]^{\ }="t1"_{\widetilde{f}_1^{k_2}} &v_2 \ar[r]_{g_2} &x_3\\
	&&u_1v_2\times_{x_1} v_1v_2 \ar[r] \ar[ur]_(.9){\ }="s2" & v_1v_2 \ar[ur]_{\widetilde{g}_1^{k_1}}^(.3){\ }="t2"
	\ar@{=>}"s1";"t1"_{\lambda_{f_1}a_2}
	\ar@{=>}"s2";"t2"_{\widetilde{a}_1^{k_2}}
}\\
&\equals
\diagram@C-8pt{
	&u_1u_2\times_{u_1} u_1 v_2\ar[r]^-{q}&u_2v_2 \ar[r] \ar[dr]  & u_2 \ar[dr]^{f_2}_(.2){\ }="s1"\\
	(u_1u_2)v_2\times_{x_1} (v_1v_2)u_2\ar[ur] \ar[dr] &&u_1v_2 \ar[r]^{\widetilde{f}_1^{k_2}} &v_2 \ar[r]_{g_2}^(.3){\ }="t1" &x_3\\
	&u_1v_2\times_{x_1} v_1v_2 \ar[r] \ar[ur]_(.9){\ }="s2" & v_1v_2 \ar[ur]_{\widetilde{g}_1^{k_1}}^(.3){\ }="t2"
	\ar@{=>}"s1";"t1"_{a_2}
	\ar@{=>}"s2";"t2"_{\widetilde{a}_1^{k_2}}
}\\
&\equals
\diagram@C-8pt{
	&u_1u_2\times_{u_1} u_1 v_2\ar[r]^-{q}&u_2v_2 \ar[r] \ar[dr]  & u_2 \ar[dr]^{f_2}_(.2){\ }="s1"\\
	(u_1u_2)v_2\times_{x_1} (v_1v_2)u_2 \ar[ur]_(.8){\ }="s2" \ar@/_1pc/[]!R;[urr]^{\ }="t2"
	&& &v_2 \ar[r]_{g_2}^(.3){\ }="t1" &x_3
	\ar@{=>}"s1";"t1"_{a_2}
	\ar@{=>}"s2";"t2"_{\widetilde{a}'_1}
}
\end{align*}
where the last equality holds as $u_2v_2\to v_2$ is ff, so we can lift $\widetilde{a}_1^{k_2}$ to $\widetilde{a}'_1$.
But now $\widetilde{a}'_1$ is also a lift of $a_1 1_Q$ through $u_2v_2\to x_2$, hence $\widetilde{a}_1 = \widetilde{a}'_1$.
This give us $\left(\rho_{(j_2,f_2)}a_1 + \lambda_{(g_1,k_1)}a_2\right)1_\pi = \left(\lambda_{(j_1,f_1)}a_2 + \rho_{(k_2,g_2)}a_1\right)1_\pi$, and hence the desired result, using the fact $\pi$ is co-ff.

\section*{Acknowledgement}
The author was supported by the Australian Research Council's \emph{Discovery Projects} funding scheme (grant numbers DP130102578 and DP180100383), funded by the Australian Government, and also by {Mrs~R.} Thanks are due to an anonymous referee for suggesting improvements and noticing inadvertently ommitted details.

\bibliographystyle{amsplain}

\begin{thebibliography}{{Mac}71}

\bibitem[Bar06]{Bartels}
{Bartels, T.}, \emph{{Higher gauge theory I: 2-Bundles}}, Ph.D. thesis, University
  of California Riverside, 2006, {arXiv:math.CT/0410328}.

\bibitem[GB14]{GarnerBourke}
{Garner, R. \and J.\ Bourke}, \emph{Two-dimensional regularity and
  exactness}, J. Pure Appl. Algebra 218 (2014),
  1346--1371, {arXiv:1304.5275}, doi:10.1016/j.jpaa.2013.11.021.

\bibitem[JY]{Johnson-Yau}
Johnson, N. \and D. Yau, ``2-{Dimensional} {Categories}'', Oxford
  University Press, in press. Draft version available at {arXiv:2002.06055}.

\bibitem[Kar14]{Karagila_14}
{Karagila, A.}, \emph{Embedding orders into cardinals with {$DC_\kappa$}},
  Fund. Math. 226 (2014), 143--156, {arXiv:1212.4396},
  doi:10.4064/fm226-2-4.

\bibitem[LNS17]{LandNikolausSzumilo}
{Land, M., T.\ Nikolaus \and K.\ Szumi\l{o}}, \emph{Localization of
  cofibration categories and groupoid {$C^*$}-algebras}, Algebr. Geom. Topol.
  17(5) (2017), 3007--3020, {arXiv:1609.03805},
  doi:10.2140/agt.2017.17.3007.

\bibitem[{Mac}71]{CWM}
{{Mac Lane}, S.}, ``Categories for the working mathematician'',
  Springer-Verlag, 1971, doi:10.1007/978-1-4757-4721-8.

\bibitem[Mak96]{Makkai}
{Makkai, M.}, \emph{Avoiding the axiom of choice in general category theory}, J.
  Pure Appl. Algebra 108 (1996), 109--173,
  doi:10.1016/0022-4049(95)00029-1.

\bibitem[Pro96]{Pronk_96}
{Pronk, D.}, \emph{Etendues and stacks as bicategories of fractions}, Compositio
  Math. 102(3) (1996), 243--303,
  \url{http://www.numdam.org/item?id=CM_1996__102_3_243_0}.

\bibitem[PW14]{Pronk-Warren_14}
Pronk, D. \and M.~Warren, \emph{Bicategorical fibration structures and stacks},
  Theory Appl. Categ. 29(29) (2014), 836--873,
  {arXiv:1303.0340}.

\bibitem[Rob12]{Roberts_12}
{Roberts, D.M.},  \emph{Internal categories, anafunctors and localisation}, Theory Appl. Categ. 26(29) (2012), 788--829, {arXiv:1101.2363}.

\bibitem[Rob15]{Roberts_15a}
{Roberts, D.M.}, \emph{The weak choice principle {WISC} may fail in the category of sets}, Studia Logica 103 (2015), 1005--1017, {arXiv:1311.3074},
  doi:10.1007/s11225-015-9603-6.

\bibitem[Rob16]{Roberts_14a}
{Roberts, D.M.}, \emph{On certain 2-categories admitting localisation by bicategories of fractions}, Applied Categorical Structures 24(4) (2016), 373--384, {arXiv:1402.7108}, doi:10.1007/s10485-015-9400-4.

\bibitem[Sim06]{Simpson_06}
{Simpson, C.}, \emph{Explaining {G}abriel-{Z}isman localization to the computer}, J. Automat. Reason. 36(3) (2006), 259--285,
  {arXiv:math/0506471}, doi:10.1007/s10817-006-9038-x.

\bibitem[Str82]{Street_82}
{Street, R.}, \emph{Two-dimensional sheaf theory}, J. Pure Appl. Algebra
  23(3) (1982), 251--270, doi:10.1016/0022-4049(82)90101-3.

\bibitem[vdBM14]{vdBerg-Moerdijk_14}
{van~den Berg, B. \and I.\ Moerdijk}, \emph{The axiom of multiple choice and models for constructive set theory}, J. Math. Log. 14(1)
  (2014), {arXiv:1204.4045}, doi:10.1142/50219061314500056.

\end{thebibliography}

\end{document}